\newtheorem{theorem}{Theorem}
\newtheorem{proposition}[theorem]{Proposition}
\newtheorem{lemma}[theorem]{Lemma}
\newtheorem{corollary}[theorem]{Corollary}
\theoremstyle{remark}
\newtheorem{remark}{Remark}
\definecolor{orange}{rgb}{1,0.5,0}
\newcommand{\iu}{\mathrm{i}} 
\newcommand{\intd}{\mathrm{d}} 
\newcommand{\lmr}{\text{lmr}} 
\newcommand{\CC}[1]{\mathcal{C}^{#1}} 
\newcommand{\lt}{\underline{t}} 
\newcommand{\gt}{\overline{t}} 
\newcommand{\ltau}{\underline{\tau}} 
\newcommand{\gtau}{\overline{\tau}} 
\newcommand{\quotes}[1]{"#1{}"} 
\newcommand{\lenpar}{0.4\baselineskip} 
\newcommand{\scalefactor}{0.85} 
\newcommand{\C}{\mathbb{C}}
\newcommand{\D}{\mathbb{D}}
\newcommand{\R}{\mathbb{R}}
\newcommand{\N}{\mathbb{N}}
\newcounter{cntselflist}
\newenvironment{selflist}{\begin{list}{\mbox{}\hspace{0.5cm}\mbox{}\arabic{cntselflist})\mbox{ }}{
	\setlength\itemindent{0pt}
	\setlength\leftmargin{0pt}
	\setlength\itemsep{3pt}
	\setlength\labelwidth{0pt}
	\setlength\labelsep{0pt}
	\usecounter{cntselflist}
	}  }{\end{list}}
\begin{document}
	\setcounter{footnote}{1}
	\author{Christoph B{\"o}hm and Wolfgang Lauf}
	\title{A Komatu-Loewner Equation for Multiple Slits}
	\date{\today}
	\maketitle

	%
	\begin{abstract}
		We give a generalization of the Komatu-Loewner equation to multiple slits. 
		Therefore, we consider an $n$-connected circular slit disk $\Omega$ as 
		our initial domain minus $m\in \N$ disjoint, simple and continuous curves that
		grow from the outer boundary $\partial \D$ of $\Omega$ into the interior. Consequently
		we get a decreasing family $(\Omega_t)_{t\in[0,T]}$ of domains with $\Omega_0=\Omega$.
		We will prove that the corresponding Riemann mapping functions $g_t$ from $\Omega_t$ onto
		a circular slit disk, which are normalized by $g_t(0)=0$ and $g_t'(0)>0$, satisfy a 
		Loewner equation known as the Komatu-Loewner equation.
	\end{abstract}

		\section{Introduction and results}
Our aim is to generalize the Komatu-Loewner equation (see \cite{BauerFriedrichCSD} or
\cite{Komatu}) to multiple slits. Therefore, we choose circularly slit disks, that is the 
unit disk $\D$ minus proper concentric slits, to be the class of 
standard domains. First of all we start with an $n$-connected ($n\in\N$) circularly slit 
disk $\Omega$ and $m\in\N$ disjoint, simple and continuous curves 
$\gamma_k:[0,T]\rightarrow \bar \Omega$ 
($k=1,\ldots,m$) with strictly increasing parametrizations,
$\gamma_k\big((0,T]\big)\subset \Omega\setminus\{0\}$ and 
$\gamma_k(0)\in\partial\D$. This induces a family 
of conformal mappings $(g_t)_{t\in[0,T]}$, using an extended version of Riemann's well known mapping 
theorem for multiply connected domains:
\[
	g_t: \Omega_t:=\Omega \setminus \left( \bigcup_{k=1}^m \gamma_k\big((0,t]\big) \right) 
	\rightarrow D_t
\]
where $D_t$ denotes a circularly slit disk. If we claim $g_t(0)=0$, $g_t'(0)>0$ and that the 
outer boundary component of $\Omega_t$ corresponds to the unit circle $\partial \D$,
this mapping is unique. See \cite{ConwayII}, Chapter 15.6, for further details. 
Sometimes we write $D(t)$ instead of $D_t$ and $\Omega(t)$ instead of $\Omega_t$.
Later we will show (see Proposition \ref{Pro:Monfunf}) that the function $t\mapsto g'_t(0)$ is 
strictly increasing and continuous in $[0,T]$ (see Proposition \ref{Pro:ConMovFXi}).
As $g_0(z)=z$, it is always possible to choose a parametrization $t(\tau)$ with 
$g'_{t(\tau)}(0)=e^{\tau}$. 
\begin{center}
	\includegraphics[scale=\scalefactor]{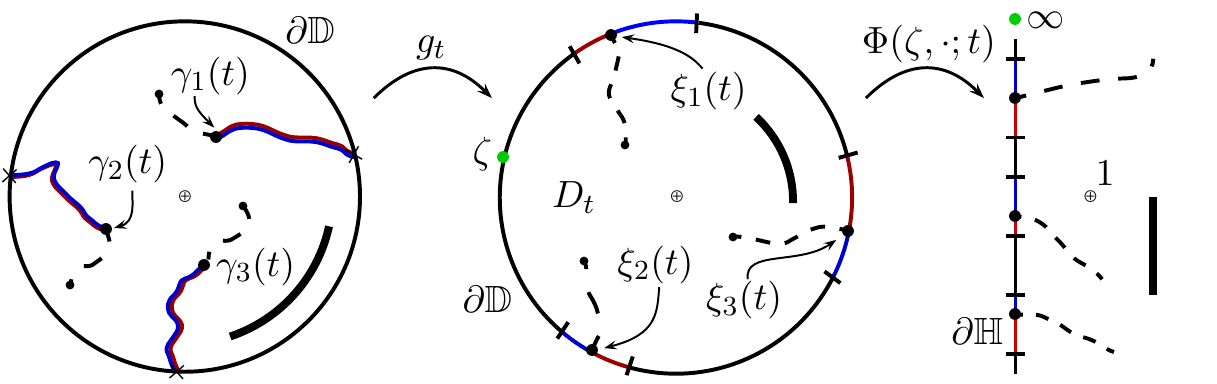} 
\end{center}
A very important quantity in the single slit case and also in
this case is the so called \textit{logarithmic mapping radius} in terms of $\lmr(g):=
\ln|g'(0)|$, where $g$ needs to be analytic and locally injective in $z=0$.
Now, we will define a term that \quotes{measures} the logarithmic mapping radius relative to 
the total logarithmic mapping radius induced by each $\gamma_k$ with $k=1,\ldots,m$. 
Therefore, we introduce the abbreviation
\begin{align*}
	\Gamma_k(t,\tau):= \bigcup_{j=1\atop j\ne k}^m \gamma_j\big( (0,\tau] \big) \cup 
		\gamma_k\big((0,t] \big),
\end{align*}
with $t,\tau\in[0,T]$. We will denote the corresponding normalized mapping function by 
$f_{k;t,\tau}$, i.e.
\[
	f_{k;t,\tau}: \Omega_k(t,\tau):=\Omega\setminus \Gamma_k(t,\tau) \rightarrow D_k(t,\tau)
\]
where $D_k(t,\tau)$ denotes a circularly slit disk. 
\begin{center}
	\includegraphics[scale=\scalefactor]{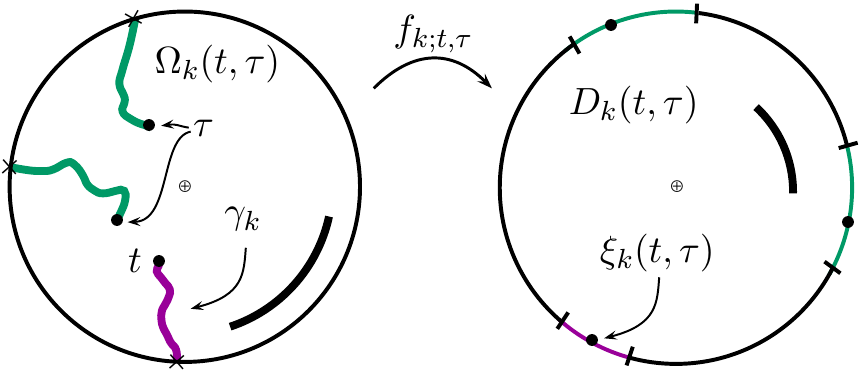} 
\end{center}
Next, we define the following limit
\[
	\lambda_k(t_0) := \lim_{t\rightarrow t_0} \frac{\lmr(f_{k;t,t_0})-
		\lmr(f_{k;t_0,t_0})}{t-t_0},
\]
i.e. $\lambda_k(t_0)$ exists, if $t \mapsto\lmr(f_{k;t,t_0})$ is differentiable at $t_0$.
The first theorem states the existence of the functions $\lambda_k$.
\begin{theorem} \label{The:LamAlmEve}
	Each function $t\mapsto \lambda_k(t)$ (with $k=1,\ldots,m$) exists almost everywhere
	in [0,T]. 
\end{theorem}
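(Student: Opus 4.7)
The plan is to combine a domain-monotonicity sandwich with a factorization that reduces the two-parameter problem to a single-slit Komatu--Loewner evolution, allowing us to invoke existing single-slit results.

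From the definitions of $\Omega_k(t,t_0)$ and $\Omega_s$ we have the chain
\[
	\Omega_{\max(t,t_0)} \;\subset\; \Omega_k(t,t_0) \;\subset\; \Omega_{\min(t,t_0)},
\]
since $\Omega_k(t,t_0)$ truncates the $k$th curve at parameter $t$ but the others only at $t_0$. Because the logarithmic mapping radius onto a circularly slit disk is monotone under set inclusion of the domain (smaller domain yielding larger $|g'(0)|$), this translates into
\[
	\ell(\min(t,t_0)) \;\le\; \lmr(f_{k;t,t_0}) \;\le\; \ell(\max(t,t_0)), \qquad \ell(s):=\lmr(g_s),
\]
so that
\[
	0 \;\le\; \frac{\lmr(f_{k;t,t_0}) - \ell(t_0)}{t - t_0} \;\le\; \frac{\ell(t) - \ell(t_0)}{t - t_0}.
\]
Since $\ell$ is strictly increasing and continuous on $[0,T]$ (Propositions~\ref{Pro:Monfunf} and~\ref{Pro:ConMovFXi}), Lebesgue's theorem supplies $\ell'(t_0)$ at almost every $t_0 \in [0,T]$, and at each such point the difference quotient defining $\lambda_k$ is non-negative and bounded by a quantity tending to $\ell'(t_0)$.

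To pass from boundedness to convergence I would use the factorization $f_{k;t,t_0} = h_t^{(t_0)} \circ g_{t_0}$, valid for $t \ge t_0$, where $h_t^{(t_0)}$ is the normalized conformal map from the single-slit domain $D_{t_0} \setminus g_{t_0}(\gamma_k((t_0,t]))$ onto a circularly slit disk (an analogous factorization applies for $t \le t_0$ with source and target swapped). Uniqueness of the normalized map and the chain rule for $\lmr$ yield
\[
	\lmr(f_{k;t,t_0}) - \ell(t_0) \;=\; \lmr(h_t^{(t_0)}),
\]
so $\lambda_k(t_0)$ equals the initial-time derivative of a single-slit Komatu--Loewner evolution in the reference domain $D_{t_0}$, driven by the curve $s \mapsto g_{t_0}(\gamma_k(t_0+s))$.

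The main obstacle is to extract a.e.\ existence of this initial-time derivative from the single-slit results of \cite{BauerFriedrichCSD,Komatu}, which ordinarily give a.e.\ differentiability in the single-slit time $s$ rather than at $s=0$. The null set of bad $s$ depends a priori on the reference domain $D_{t_0}$, so one cannot apply Lebesgue's theorem slice by slice. The expected argument uses the joint continuity of $(s,t_0) \mapsto \lmr(h_{t_0+s}^{(t_0)})$ (coming from continuous dependence of the Riemann map on the domain under Carathéodory kernel convergence) together with the sandwich bound above, to show that the set of $t_0$ for which the initial-time derivative fails to exist is contained in the set on which $\ell$ is not differentiable, hence of Lebesgue measure zero.
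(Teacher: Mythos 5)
Your sandwich $\Omega_{\max(t,t_0)}\subset\Omega_k(t,t_0)\subset\Omega_{\min(t,t_0)}$ and the resulting bound $0\le\bigl(\lmr(f_{k;t,t_0})-\lmr(g_{t_0})\bigr)/(t-t_0)\le\bigl(\lmr(g_t)-\lmr(g_{t_0})\bigr)/(t-t_0)$ are correct, and the factorization $f_{k;t,t_0}=h_t^{(t_0)}\circ g_{t_0}$ is also correct. But these only give that the difference quotient is, for a.e.\ $t_0$, trapped between $0$ and something converging to $\ell'(t_0)$; they say nothing about whether the quotient actually converges, and your closing claim --- that the set of $t_0$ where the limit fails to exist is contained in the non-differentiability set of $\ell$ --- is precisely the assertion that needs proof, not a consequence of the joint continuity you invoke. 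A priori the quotient could oscillate between $0$ and $\ell'(t_0)$ (capacity growth shuttling between the $k$th slit and the others on finer and finer scales), and nothing in your argument rules this out. The reduction to a single-slit evolution in $D_{t_0}$ does not help either: the single-slit results of Bauer--Friedrich and Komatu presuppose a capacity parametrization, and in any case the reference domain $D_{t_0}$ and the driving curve both move with $t_0$, so the exceptional null sets cannot be assembled across $t_0$. This is a genuine gap, and it is exactly the ``essential difficulty'' of the multi-slit case.

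The paper closes this gap by a different mechanism. It first proves a uniform quasi-invariance statement (Proposition~\ref{Pro:DifQuoAbs}): the increment $\lmr(f_{k;\lt,\tau})-\lmr(f_{k;\gt,\tau})$ changes only by a factor in $(1-\epsilon,1+\epsilon)$ when the reference time $\tau$ is perturbed, uniformly once $\gt-\lt$ is small. Using this, it shows that the Riemann-type sums $S(f_k,t,Z)=\sum_l[\lmr(f_{k;t_{l+1},t_l})-\lmr(f_{k;t_l,t_l})]$ converge as $|Z|\to 0$ to a \emph{monotone increasing} function $c_k(t)$, which is differentiable a.e.\ by Lebesgue's theorem; a second application of Proposition~\ref{Pro:DifQuoAbs} gives $|c_k(t)-c_k(t_0)-[\lmr(f_{k;t,t_0})-\lmr(g_{t_0})]|\le\epsilon\,|\lmr(g_t)-\lmr(g_{t_0})|$, so that $\lambda_k(t_0)$ exists and equals $\dot c_k(t_0)$ at every $t_0$ where both $c_k$ and $t\mapsto\lmr(g_t)$ are differentiable. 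The missing ingredient in your proposal is thus the construction, for each $k$ separately, of a single monotone function of $t_0$ that controls the difference quotient; your $\ell$ controls only the sum over $k$.
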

The next theorem shows a connection between the existence of $\lambda_k$ and differentiability
of the trajectories $t\mapsto g_t(z)$. Therefore, we denote by $z\mapsto\Phi(\xi,z;t)$ 
for all $t\ge 0$, $\xi\in\partial\D$ and $z\in D_t$ the unique conformal
mapping that maps the circularly slit disk $D_t$ onto the right half-plane minus slits parallel 
to the imaginary axis with $\Phi(\xi,\xi;t)= \infty$ and $\Phi(\xi,0;t)=1$. The existence and
uniqueness of $\Phi$ is given by \cite{Courant}, Theorem 2.3.
\begin{theorem} \label{The:LoKCSDMul}
	Denote by $g_t$ the corresponding mapping function as mentioned before. 
	Assume the values $\lambda_k(t_0)$ ($k=1,\ldots,m$)
	exist for some $t_0\in[0,T]$, then the function $g_t(z)$ is differentiable (w.r.t. $t$) 
	at $t_0$ for every $z\in\Omega_{t_0}$ and it holds
	\[
		\dot g_{t_0}{(z)} = g_{t_0}(z) \sum_{k=1}^m \lambda_k(t_0) \cdot \Phi(\xi_k(t_0),
		g_{t_0}(z);t_0).
	\]
	Furthermore, the functions $\lambda_k(t_0)$ fulfil the condition 
	$\sum_{k=1}^m \lambda_k(t_0) = 1$ if the condition $g_{t}'(0)=c\,e^t$ holds in 
	some neighbourhood of $t_0$ with an arbitrary constant $c>0$.
\end{theorem}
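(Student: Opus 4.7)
The plan is to transport the problem to the fixed domain $D_{t_0}$ and superpose the single-slit Komatu-Loewner equations satisfied by the auxiliary families $f_{k;t,t_0}$. For $t$ near $t_0$ I set $G(t,z):=g_t\circ g_{t_0}^{-1}(z)$ and $H_k(t,z):=f_{k;t,t_0}\circ g_{t_0}^{-1}(z)$; both are normalized conformal maps of slit subdomains of $D_{t_0}$ that reduce to $\mathrm{id}$ on $D_{t_0}$ at $t=t_0$. Since $g_t(z)=G(t,g_{t_0}(z))$ and $G(t_0,\cdot)=\mathrm{id}$, the chain rule gives $\dot g_{t_0}(z)=\partial_tG(t,g_{t_0}(z))\big|_{t=t_0}$, so the Komatu-Loewner equation of the theorem is equivalent to
\[
\partial_t G(t,w)\big|_{t=t_0}=\sum_{k=1}^m \lambda_k(t_0)\,w\,\Phi(\xi_k(t_0),w;t_0),\qquad w\in D_{t_0}.
\]

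Each family $t\mapsto H_k(t,\cdot)$ is a single-slit deformation of $D_{t_0}$: the moving slit $g_{t_0}(\gamma_k((t_0,t]))$ grows from $\xi_k(t_0)=g_{t_0}(\gamma_k(t_0))\in\partial\D$ into $D_{t_0}$. Because $\lmr(H_k(t,\cdot))=\lmr(f_{k;t,t_0})-\lmr(g_{t_0})$, the hypothesis of the theorem is precisely that $t\mapsto\lmr(H_k(t,\cdot))$ is differentiable at $t_0$ with derivative $\lambda_k(t_0)$. The single-slit Komatu-Loewner equation of \cite{BauerFriedrichCSD}, applied with this (non-canonical) parametrization, then yields
\[
\partial_t H_k(t,w)\big|_{t=t_0}=\lambda_k(t_0)\,w\,\Phi(\xi_k(t_0),w;t_0),\qquad w\in D_{t_0}.
\]
Hence the task reduces to proving the superposition identity $\partial_t G|_{t_0}=\sum_{k=1}^{m}\partial_t H_k|_{t_0}$.

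To establish it I would telescope over the slits. For small $\varepsilon>0$ introduce intermediate mappings $G^{(0)}=\mathrm{id},G^{(1)},\ldots,G^{(m)}=G(t_0+\varepsilon,\cdot)$ on $D_{t_0}$, where $G^{(j)}$ corresponds to growing only the slit pieces $\gamma_k((t_0,t_0+\varepsilon])$ with $k\le j$. Each quotient $G^{(j)}\circ(G^{(j-1)})^{-1}$ is a one-slit deformation of a perturbed circularly slit disk $D^{(j-1)}$ whose perturbation from $D_{t_0}$ consists of at most $j-1$ short slits whose diameters tend to $0$ as $\varepsilon\to 0$. Applying the one-slit equation on $D^{(j-1)}$ and controlling its kernel against $\Phi(\xi_j(t_0),\cdot\,;t_0)$ via a Carath\'eodory/Hurwitz continuity argument (using the uniqueness of $\Phi$ from \cite{Courant}), each step contributes $\varepsilon\,\lambda_j(t_0)\,w\,\Phi(\xi_j(t_0),w;t_0)+o(\varepsilon)$ locally uniformly on compact subsets of $D_{t_0}\setminus\{\xi_1(t_0),\ldots,\xi_m(t_0)\}$. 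Summing and dividing by $\varepsilon$ produces the superposition identity, and a Vitali/normal-family argument upgrades pointwise differentiability to every $z\in\Omega_{t_0}$.

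Finally, for the normalization claim, differentiating the Loewner equation in $z$ at $z=0$ and using $g_t(0)=0$ together with $\Phi(\xi,0;t)=1$ gives $\tfrac{d}{dt}\ln g_t'(0)=\sum_{k=1}^{m}\lambda_k(t)$; under the hypothesis $g_t'(0)=c\,e^t$ the left-hand side equals $1$, whence $\sum_{k=1}^{m}\lambda_k(t_0)=1$. I expect the main obstacle to be the kernel-stability estimate in the telescoping step --- showing that the $\Phi$-kernel on $D^{(j-1)}$ converges locally uniformly to $\Phi(\xi_j(t_0),\cdot\,;t_0)$ as $\varepsilon\to 0$. With this ingredient in hand, the rest of the argument is largely bookkeeping.
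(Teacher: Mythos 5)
Your reduction to the superposition identity is a reasonable outline, but two of its load-bearing steps are exactly the hard points, and as written both have genuine gaps. First, you invoke the single-slit Komatu--Loewner equation of \cite{BauerFriedrichCSD} for the families $H_k(t,\cdot)=f_{k;t,t_0}\circ g_{t_0}^{-1}$ under a non-canonical parametrization in which only the existence of the one-sided derivative $\lambda_k(t_0)$ of $t\mapsto\lmr(f_{k;t,t_0})$ at the single point $t_0$ is assumed. The cited result is proved for the normalization $g_t'(0)=e^t$, and a reparametrization argument to reduce to that case needs differentiability of the inverse of $t\mapsto\lmr(f_{k;t,t_0})$ (and fails outright if $\lambda_k(t_0)=0$). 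Worse, the paper's own Remark points out (following \cite{Zhen}) that the proof of right differentiability in \cite{BauerFriedrichCSD} is problematic; the present paper's treatment of $t\searrow t_0$ and $t\nearrow t_0$ separately, via the Poisson-type representation of Lemma \ref{Lem:PoiForBF}, the mean value theorem on the arcs $\beta_k(t,t_0)$, and the distortion estimate of Lemma \ref{Lem:EstSetA}, is precisely the replacement for that argument. So the statement you use as a black box is, for $m=1$, essentially the theorem itself under its weak hypothesis, and cannot be cited away.

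Second, in the telescoping step the $j$-th factor $G^{(j)}\circ(G^{(j-1)})^{-1}$ is a one-slit deformation whose speed is $\lmr(G^{(j)})-\lmr(G^{(j-1)})$, i.e.\ the increment of the mapping radius when $\gamma_j$ grows from $t_0$ to $t_0+\varepsilon$ \emph{with the slits $\gamma_1,\dots,\gamma_{j-1}$ already advanced to $t_0+\varepsilon$}. Your hypothesis controls only the increment with all other slits frozen at $t_0$. That these two increments agree up to a factor $1+o(1)$ is a nontrivial stability statement --- it is the content of Proposition \ref{Pro:DifQuoAbs}, whose proof occupies a good part of Section \ref{Sec:Proof2} and rests on Lemmas \ref{Lem:FunOneAbs}--\ref{Lem:FunTwoAbs}. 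You address the kernel stability of $\Phi$ on the perturbed domains $D^{(j-1)}$ (which is indeed handled in the paper by Lemma \ref{Lem:ConvPhi} and Proposition \ref{Pro:ComConf}), but not this speed stability; without it the telescoped sum need not converge to $\sum_k\lambda_k(t_0)\,w\,\Phi(\xi_k(t_0),w;t_0)$. For comparison, the paper avoids telescoping altogether: Lemma \ref{Lem:PoiForBF} writes $\log(g_{\gt,\lt}(z)/z)$ directly as a sum of $m$ boundary integrals over the arcs $\beta_k$, and each integral's total mass is identified with $2\pi$ times the relevant lmr-increment by a conformal transplantation, so the weights $\lambda_k(t_0)$ appear without ever growing the slits sequentially. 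Your derivation of $\sum_k\lambda_k(t_0)=1$ from the normalization at $z=0$ is fine and matches the paper's.
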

The so called \textit{driving terms} $\xi_k(t_0)$ stands for the images of the tips
$\gamma_k(t_0)$ under the mapping $g_{t_0}$, i.e. $\xi_k(t_0):= g_{t_0}(\gamma_k(t_0))$.
Later (see Proposition \ref{Pro:ConMovFXi}) we will also prove the continuity of the functions
$t\mapsto \xi_k(t)$.\\
If $m=1$, that means we consider the one slit case, we get the following corollary.
\begin{corollary}
	Denote by $g_t$ the corresponding mapping function as mentioned before.
	Assume the slit is parametrized in such a way that $g'_t(0)=e^t$ holds.
	Then the function $g_t(z)$ is differentiable (w.r.t. $t$) 
	for all $t\in[0,T]$ and every $z\in\Omega_{t}$ and it holds
	\[
		\dot g_{t}{(z)} = g_{t}(z)\,  \Phi(\xi_k(t),g_{t}(z);t)
	\]
	for all $t\in[0,T]$ and every $z\in\Omega_{t}$.
\end{corollary}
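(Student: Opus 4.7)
The plan is to reduce the corollary directly to Theorem \ref{The:LoKCSDMul} by exploiting the fact that the single-slit case trivializes the auxiliary family $f_{k;t,\tau}$. The only real work is to verify that $\lambda_1(t_0)$ exists, and equals $1$, for \emph{every} $t_0\in[0,T]$ (not merely almost every, as Theorem \ref{The:LamAlmEve} would give in general).

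First, I would unpack the definition of $\Gamma_k(t,\tau)$ in the case $m=1$, $k=1$. The union over $j\ne k$ is empty, so $\Gamma_1(t,\tau)=\gamma_1((0,t])$ carries no dependence on $\tau$. Consequently $\Omega_1(t,\tau)=\Omega\setminus\gamma_1((0,t])=\Omega_t$, and by the uniqueness of the normalized mapping onto a circularly slit disk, $f_{1;t,\tau}=g_t$ for every $\tau$. In particular,
\[
\lmr\bigl(f_{1;t,t_0}\bigr)=\lmr(g_t)=\ln|g_t'(0)|=t,
\]
using the assumed parametrization $g_t'(0)=e^t$ (which in particular is positive, so the absolute value is harmless).

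From this the difference quotient in the definition of $\lambda_1(t_0)$ simplifies to
\[
\lambda_1(t_0)=\lim_{t\to t_0}\frac{t-t_0}{t-t_0}=1
\]
for every $t_0\in[0,T]$. Thus $\lambda_1$ exists identically and equals $1$ on all of $[0,T]$. Applying Theorem \ref{The:LoKCSDMul} at every $t_0$ then yields differentiability of $t\mapsto g_t(z)$ for all $t\in[0,T]$ and every $z\in\Omega_t$, together with
\[
\dot g_t(z)=g_t(z)\,\lambda_1(t)\,\Phi(\xi_1(t),g_t(z);t)=g_t(z)\,\Phi(\xi_1(t),g_t(z);t),
\]
which is exactly the claimed formula. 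The side condition $\sum_k\lambda_k(t_0)=1$ from Theorem \ref{The:LoKCSDMul} is automatically compatible with $\lambda_1\equiv 1$, giving a sanity check but no extra constraint.

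I do not expect any serious obstacle: the whole corollary is essentially the observation that the multiple-slit bookkeeping collapses when there is only one slit, so the main theorem applies pointwise rather than only almost everywhere. The only thing one has to be slightly careful about is confirming that the normalization uniquely identifies $f_{1;t,\tau}$ with $g_t$; this follows from the uniqueness statement recalled from \cite{ConwayII} in the introduction.
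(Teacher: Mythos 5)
Your proposal is correct and is essentially the argument the paper intends (the paper leaves this corollary's proof implicit as an immediate consequence of Theorem \ref{The:LoKCSDMul}): for $m=1$ the family $f_{1;t,\tau}$ collapses to $g_t$ by uniqueness of the normalized map, so $\lmr(f_{1;t,t_0})=t$ under the parametrization $g_t'(0)=e^t$, giving $\lambda_1(t_0)=1$ for every $t_0$ and allowing the theorem to be applied pointwise.
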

This result is well known as the \textit{radial one slit Komatu-Loewner equation}, 
see \cite{BauerFriedrichCSD} or \cite{Komatu}.\\
In the simply connected case we get
\[
	\Phi(\zeta, z;t) = \frac{\zeta + z}{\zeta -z},
\]
since $D_t\equiv \D$. If we take this into account, we get the following corollary in case 
of simply connected domains. Here we will also denote by $g_t$ the normalized Riemann map, 
where $\Omega=\D $.
\begin{corollary} \label{Cor:SimpleConCase}
	Denote by $g_t$ the corresponding mapping function as mentioned before. 
	Assume the values $\lambda_k(t_0)$ ($k=1,\ldots,m$)
	exist for some $t_0\in[0,T]$, then the function $g_t(z)$ is differentiable (w.r.t. $t$) 
	at $t_0$ for every $z\in\Omega_{t_0}$ and it holds
	\[
		\dot g_{t_0}{(z)} = g_{t_0}(z) \sum_{k=1}^m \lambda_k(t_0) \cdot \frac{\xi_k(t_0) +
		g_{t_0}(z)}{\xi_k(t_0) - g_{t_0}(z)}.
	\]
	Furthermore, the functions $\lambda_k(t_0)$ fulfil the condition 
	$\sum_{k=1}^m \lambda_k(t_0) = 1$ if the condition $g_{t}'(0)=c\,e^t$ holds in 
	some neighbourhood of $t_0$ with an arbitrary constant $c>0$.
\end{corollary}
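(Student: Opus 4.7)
The plan is to specialize Theorem \ref{The:LoKCSDMul} to the case $\Omega = \D$ and exhibit the map $\Phi$ in closed form. Since $\D$ is simply connected, every $\Omega_t$ is likewise simply connected, so each image domain $D_t$ is a circularly slit disk without any concentric slits, i.e.\ $D_t = \D$ for every $t \in [0,T]$. In particular $\Phi(\xi,\,\cdot\,;t)$ no longer depends on $t$ and is simply the conformal map from $\D$ onto the right half-plane (the ``minus slits'' part being empty) determined by the normalizations $\Phi(\xi,\xi) = \infty$ and $\Phi(\xi,0) = 1$.

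Next I would verify directly that $\Phi(\xi,z) = \frac{\xi + z}{\xi - z}$ is this unique map. Being a Möbius transformation it is conformal on $\D$; at $z = 0$ it takes the value $1$ and at $z = \xi$ it has a pole. Substituting $z = \xi e^{\iu\theta}$ on $\partial\D$ yields $\iu\cot(\theta/2)$, so the unit circle is mapped bijectively onto the imaginary axis, hence $\D$ is mapped onto the right half-plane (with $0\mapsto 1$ fixing the correct side). Uniqueness follows from the standard fact that any other candidate $\widetilde{\Phi}$ would make $\widetilde{\Phi}\circ\Phi^{-1}$ a conformal automorphism of the right half-plane fixing both $1$ and $\infty$; transporting via $w \mapsto \frac{w-1}{w+1}$ turns this into an automorphism of $\D$ fixing $0$ and $1$, which must be the identity.

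With $\Phi$ identified, substituting the explicit formula into the equation of Theorem \ref{The:LoKCSDMul} immediately produces the stated Loewner equation. The assertion $\sum_{k=1}^m \lambda_k(t_0) = 1$ under the normalization $g_t'(0) = c\,e^t$ is inherited verbatim from that theorem. There is no real obstacle here: the corollary is a direct specialization once the explicit form of $\Phi$ has been checked, and the only technical point worth writing out carefully is the verification of the boundary behaviour that pins down the image as the right half-plane rather than some other half-plane.
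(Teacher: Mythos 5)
Your proposal is correct and matches the paper's treatment: the paper proves this corollary simply by noting that in the simply connected case $D_t\equiv\D$ and $\Phi(\zeta,z;t)=\frac{\zeta+z}{\zeta-z}$, then invoking Theorem \ref{The:LoKCSDMul}. Your explicit verification of the normalizations and uniqueness of $\Phi$ is a welcome (if routine) addition that the paper leaves implicit.
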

In the single slit case, this equation is known as the \textit{slit radial Loewner ODE 
for inclusion chains}.\\
By combining Theorem \ref{The:LamAlmEve} and Theorem \ref{The:LoKCSDMul}
we finally get the following corollary.
\begin{corollary}
	Denote by $g_t$ the corresponding mapping function as in Theorem \ref{The:LoKCSDMul}.
	Then there exists a zero set $\mathcal{N}$ with respect to the Lebesgue measure such that the 
	functions $t\mapsto g_t(z)$ are differentiable on $[0,T]\setminus \mathcal{N}$ 
	for all $z\in \Omega_T$ and it holds
	\[
		\dot g_{t}{(z)} = g_{t}(z) \sum_{k=1}^m \lambda_k(t) \cdot \Phi(\xi_k(t),
		g_{t}(z);t).
	\]
	for all $t\in[0,T]\setminus\mathcal{N}$ and each $z\in\Omega_{t}$.
	Furthermore, the functions $\lambda_k(t_0)$ fulfil the condition 
	$\sum_{k=1}^m \lambda_k(t_0) = 1$ if the condition $g_{t}'(0)=c\,e^t$ holds in 
	some neighbourhood of $t_0$ with an arbitrary constant $c>0$.
\end{corollary}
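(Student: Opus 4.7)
The plan is to deduce this corollary as a direct synthesis of the two preceding theorems, so the argument is essentially bookkeeping about null sets and domain inclusions rather than new analysis.

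First I would invoke Theorem \ref{The:LamAlmEve} to obtain, for each $k\in\{1,\ldots,m\}$, a Lebesgue null set $\mathcal{N}_k\subset[0,T]$ outside of which $\lambda_k(t)$ exists. Setting $\mathcal{N}:=\bigcup_{k=1}^m \mathcal{N}_k$ yields a finite union of null sets, which is again a null set; for every $t\in[0,T]\setminus\mathcal{N}$ all $m$ values $\lambda_1(t),\ldots,\lambda_m(t)$ simultaneously exist.

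Next I would apply Theorem \ref{The:LoKCSDMul} pointwise: for any fixed $t\in[0,T]\setminus\mathcal{N}$, the simultaneous existence of the $\lambda_k(t)$ is exactly the hypothesis needed to conclude that $s\mapsto g_s(z)$ is differentiable at $s=t$ for every $z\in\Omega_t$, together with the explicit formula
\[
    \dot g_{t}(z) = g_{t}(z) \sum_{k=1}^m \lambda_k(t) \, \Phi(\xi_k(t), g_{t}(z);t).
\]
The only subtlety to address is that the corollary asserts differentiability for all $z\in\Omega_T$, whereas Theorem \ref{The:LoKCSDMul} gives it for $z\in\Omega_t$. This is harmless: because the family $(\Omega_s)_{s\in[0,T]}$ is decreasing, we have $\Omega_T\subset \Omega_t$ for every $t\in[0,T]$, so any $z\in\Omega_T$ automatically lies in $\Omega_t$ and the formula applies.

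Finally, the normalization statement $\sum_k \lambda_k(t)=1$ under the assumption $g_t'(0)=c\,e^t$ is literally the second conclusion of Theorem \ref{The:LoKCSDMul}, so it transfers without modification. There is no genuine obstacle here; the only thing to be careful about is noting that a \emph{finite} union of null sets is null (this would fail for a countable family of slits), and that the decreasing nature of $(\Omega_t)_t$ lets us upgrade ``for $z\in\Omega_t$'' to ``for $z\in\Omega_T$'' uniformly in $t$.
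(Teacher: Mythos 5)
Your argument is exactly the one the paper intends: the corollary is stated as an immediate combination of Theorem \ref{The:LamAlmEve} (take $\mathcal{N}$ to be the finite union of the exceptional null sets for $k=1,\ldots,m$) with the pointwise application of Theorem \ref{The:LoKCSDMul}, and your remark that $\Omega_T\subset\Omega_t$ handles the only notational wrinkle. The proposal is correct and matches the paper's (implicit) proof.
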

As a combination of our results with the results of Earle and Epstein from \cite{EarleEpstein}
we immediately get the following result for simply connected domains.
\begin{corollary} \label{Cor:RegSimp}
	Let $n=1$ and let the slits $\gamma_1,\ldots,\gamma_m$ be in $C^2([0,T])$ and regular.
	Denote by $g_t$ the corresponding mapping function as mentioned before.
	Then the limits $\lambda_k(t)$ exist for all $t\in[0,T]$ and it holds for all
	$z\in\Omega_{t}$ and all $t\in[0,T]$
	\[
		\dot g_{t}{(z)} = g_{t}(z) \sum_{k=1}^m \lambda_k(t) \cdot \frac{\xi_k(t) +
		g_{t}(z)}{\xi_k(t) - g_{t}(z)}.
	\]
	Moreover the functions $t\mapsto \lambda_k(t)$ are continuous.
\end{corollary}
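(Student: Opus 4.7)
My plan is to reduce the limit defining $\lambda_k(t_0)$ to a single-slit Loewner quantity in a simply connected auxiliary domain, and then to invoke the Earle--Epstein regularity theorem. Fix $k\in\{1,\ldots,m\}$ and $t_0\in[0,T]$. In the definition of $\lambda_k(t_0)$ only the $k$-th slit moves, so I freeze the others and introduce the simply connected base domain
\[
\Omega' := \D\setminus\bigcup_{j\ne k}\gamma_j\bigl((0,t_0]\bigr).
\]
Let $\phi:\Omega'\to\D$ be the Riemann map normalized by $\phi(0)=0$ and $\phi'(0)>0$. By the uniqueness of $f_{k;t,t_0}$ I obtain the factorization $f_{k;t,t_0}=h_t\circ\phi$, where $h_t:\D\setminus\phi(\gamma_k((0,t]))\to\D$ is the classical normalized radial map, and therefore $\lmr(f_{k;t,t_0})=\lmr(h_t)+\lmr(\phi)$. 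Since $\lmr(\phi)$ does not depend on $t$, existence of $\lambda_k(t_0)$ is equivalent to differentiability at $t_0$ of the logarithmic mapping radius of the \emph{single-slit} radial Loewner chain driven by $\tilde\gamma_k:=\phi\circ\gamma_k$ in $\D$.

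The next step is to check that $\tilde\gamma_k$ is $C^2$ and regular on $[0,t_0]$. The curve $\gamma_k([0,t_0])$ lies at positive distance from the other slits and meets $\partial\D$ only at $\gamma_k(0)$, which is a smooth boundary point of $\Omega'$; Schwarz reflection across the free arc of $\partial\D$ therefore extends $\phi$ conformally to a plane neighbourhood of $\gamma_k([0,t_0])$, so $C^2$ regularity of the slit is preserved. With this reduction, the theorem of Earle and Epstein \cite{EarleEpstein} applies to $\tilde\gamma_k$ and yields that $t\mapsto\lmr(h_t)$ is continuously differentiable on $[0,t_0]$; in particular $\lambda_k(t_0)$ exists, and Corollary \ref{Cor:SimpleConCase} then gives the displayed Loewner equation for every $t_0\in[0,T]$.

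Continuity of $t_0\mapsto\lambda_k(t_0)$ I would deduce by combining the pointwise $C^1$-conclusion above with continuous dependence of the auxiliary uniformizer $\phi=\phi_{t_0}$ on $t_0$. Since the frozen slit configuration $\bigcup_{j\ne k}\gamma_j((0,t_0])$ is a $C^2$ family of disjoint slits anchored on $\partial\D$, Kellogg--Warschawski boundary regularity makes $\phi_{t_0}$ depend continuously on $t_0$ in $C^2$ on a neighbourhood of $\gamma_k([0,T])$, and the Earle--Epstein-supplied derivative $\lambda_k(t_0)$ inherits this continuity. The main obstacle I anticipate is precisely this last step: upgrading pointwise existence of $\lambda_k$ to continuity requires uniform control of $\phi_{t_0}$ near the moving tip $\gamma_k(t_0)$, and this is where the bulk of the technical work is concentrated; the remaining ingredients are immediate consequences of results already in the paper.
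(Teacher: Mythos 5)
Your reduction for the existence of $\lambda_k(t_0)$ is exactly the paper's: freeze the other slits at time $t_0$, uniformize $\Omega'=\D\setminus\bigcup_{j\ne k}\gamma_j((0,t_0])$ by $\phi$ (the paper's $H=f_{k;0,t_0}$), factor $f_{k;t,t_0}=h_t\circ\phi$, check via reflection that $\phi\circ\gamma_k$ is still $C^2$ and regular, and apply Earle--Epstein to get $t\mapsto\lmr(h_t)\in C^1$. That part is fine and Corollary \ref{Cor:SimpleConCase} then yields the differential equation.

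The continuity argument, however, has a genuine gap, and it sits precisely where you say the ``bulk of the technical work'' is. Write $l(t):=\frac{\intd}{\intd t}\lmr(h_t)$ for the Earle--Epstein derivative of the chain anchored at $t_0$. For $t_1\ne t_0$ one has $l(t_1)\ne\lambda_k(t_1)$ in general, because $\lambda_k(t_1)$ is computed with the other slits frozen at time $t_1$, i.e.\ from $\lmr(f_{k;\cdot,t_1})$, whereas $l(t_1)$ comes from $\lmr(f_{k;\cdot,t_0})$. Your proposal to bridge this by continuous $C^2$-dependence of $\phi_{t_0}$ on $t_0$ (Kellogg--Warschawski) and then letting ``the Earle--Epstein-supplied derivative inherit this continuity'' needs a stability statement that the cited Theorem 2 of \cite{EarleEpstein} does not provide: it gives $C^1$-regularity of $t\mapsto\lmr(h_t)$ for a \emph{fixed} $C^2$ slit, not continuity of that derivative under $C^2$-perturbation of the slit. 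Even granting locally uniform (hence $C^\infty_{loc}$) convergence $\phi_{t_1}\to\phi_{t_0}$ near $\gamma_k([0,T])$ (which does follow from Proposition \ref{Pro:ComConf} plus reflection), you would still have to prove that the derivative of the mapping radius depends continuously on the driving arc --- an unproved, nontrivial claim. The paper closes exactly this gap with Proposition \ref{Pro:DifQuoAbs}: for $|t_1-t_0|$ small the difference quotients $\bigl(\lmr(f_{k;t_1+h,t_1})-\lmr(f_{k;t_1,t_1})\bigr)/h$ and $\bigl(\lmr(f_{k;t_1+h,t_0})-\lmr(f_{k;t_1,t_0})\bigr)/h$ differ by a multiplicative factor in $(1-\epsilon,1+\epsilon)$, so $|\lambda_k(t_1)-l(t_1)|\le(M+\epsilon)\epsilon$ with $M=\max l$, and continuity of $l$ from Earle--Epstein then gives continuity of $\lambda_k$ at $t_0$. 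You should replace your last step by an appeal to that proposition (or prove an equivalent comparison of increments); as written, the continuity claim is not established.
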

A proof of this corollary is given at the end of chapter \ref{Sec:Proof2}.\\[\lenpar]
As an application, see \cite{BoehmSchl}, S. Schlei\ss{}inger and the first author used the methods 
presented in this paper to show that we can always
parametrize the $m$ slits in such a way, that the functions $t\mapsto\lambda_k(t)$ 
($k=1,\ldots,m$) are constant in $[0,T]$, i.e. there are parametrizations of the slits
$\gamma_1,\ldots,\gamma_m$ so that the Komatu-Loewner equation is fulfilled everywhere with
constant coefficients $\lambda_1,\ldots,\lambda_m$.
Hereby Proposition \ref{Pro:ToolConstCoeff} and Theorem \ref{The:LoKCSDMul} play a major role
and we do not assume any regularity for the slits $\gamma_1,\ldots,\gamma_m$.
This generalizes a result of D. Prokhorov, see \cite{Prokhorov:1993}, who considered the simply 
connected case and piecewise analytic slits.
\\[\lenpar]
The remainder of this paper is organized as follows. In section \ref{Sec:ConPro} we will prove
some continuity properties (see Proposition \ref{Pro:ComConf} and Proposition \ref{Pro:ConMovFXi})
and a monotonicity property (see Proposition \ref{Pro:Monfunf}) concerning the 
logarithmic mapping radius. A proof of Theorem \ref{The:LoKCSDMul} will be given in chapter 
\ref{Sec:Proof1}. Chapter \ref{Sec:Proof2} consists primarily of the proof of Theorem 
\ref{The:LamAlmEve} and the proof of a powerful proposition (see Proposition \ref{Pro:DifQuoAbs}), 
which is used in a major way in the proof of the theorem which can be interpreted as the 
connection between Theorem \ref{The:LamAlmEve} and Theorem \ref{The:LoKCSDMul}.
Moreover Proposition \ref{Pro:DifQuoAbs} is 
used in great effect to prove Corollary \ref{Cor:RegSimp} and has a lot of applications in 
\cite{BoehmSchl}.
Finally, in chapter \ref{Sec:GenArb} we generalize Theorem \ref{The:LoKCSDMul} to 
arbitrary finitely connected domains where the curves $\gamma_1,\ldots,\gamma_m$ are 
attached to accessible boundary points.

\begin{remark}[]
One of the first persons who extended Loewner's differential equation to multiply connected domains 
was Komatu \cite{KomatuZweifach}, where he discussed an annulus as the initial domain minus one 
slit that grows from the outer boundary into the interior. Later he derived a result for 
more general circular slit annuli, see \cite{Komatu}. Recently, Bauer and Friedrich continued Komatu's
ideas and found analogous results for circular slit disks and parallel slit domains, see
\cite{BauerFriedrichCSD} and \cite{BauerFriedrichCBC}. All these equations are based on one slit
that grows from the outer boundary component into the interior, where the slit is parametrized 
in such a way that the normalization $g_t'(0)=e^t$ is fulfilled. 
In this work we continue the ideas of Bauer and Friedrich, whereby we have summarized these
in Lemma \ref{Lem:PoiForBF}. 
The proof of this Lemma follows exactly the same steps of \cite{BauerFriedrichCSD}, Theorem 5.1.
The essential difficulty in extending the previously known results to multiple slits is, that
in this case the \quotes{speed factors} $\lambda_k(t_0)$ with respect to the logarithmic capacity  
enter the picture.\\
A proof of the chordal single slit Komatu-Loewner equation mostly based on probabilistic 
arguments is given in \cite{Zhen}.
Furthermore, the authors of \cite{Zhen} point out a problem with the proof of the right 
differentiability of $g_t$ in \cite{BauerFriedrichCSD} and \cite{BauerFriedrichCBC}. 
We consider the left and right differentiability separately.
Our argumentation in the right case is mainly based on the fact, that the image domain 
of $D_t$ under the mapping $w\mapsto \Phi(\xi,w;t)$ is the right half plane minus slits parallel 
to the imaginary axis in combination with an extended version of kernel convergence due to 
Caratheodory, see Lemma \ref{Lem:ConvPhi}.
\end{remark}
\begin{remark}
An interesting question concerning Theorem \ref{The:LoKCSDMul} we do not deal with in this 
paper is the following.
Denote by $\xi_k:[0,T]\rightarrow \partial\D$ and $\lambda_k:[0,T]\rightarrow [0,1]$
for all $k=1,\ldots,m$ continuous functions with $\sum_{k=1}^m \lambda_k\equiv 1$. Moreover
let $\Omega$ be an arbitrary $n$-connected circular slit disk. Consider the implicit initial 
value problem:
\[
	\dot g_{t}{(z)} = g_{t}(z) \sum_{k=1}^m \lambda_k(t) \cdot \Phi(\xi_k(t),
	g_{t}(z);t), \quad
	g_0\equiv\text{id} :\Omega\rightarrow \C
\]
Is there always a (unique) solution to this problem, i.e. a family of normalized 
conformal mappings $g_t: \Omega_t \rightarrow D_t$ for  $t\in [0,T]$ whose driving and weight 
functions equal the given $\zeta_k$ and $\lambda_k$, respectively?
Which regularity conditions on the driving terms $\xi_k$ and weight functions $\lambda_k$
are necessary in order to generate slits?
\end{remark}
		\section{Some basic tools} \label{Sec:ConPro}
The first very useful property concerning the functions $f_{k;t,\tau}$ is given by the following
proposition.
In order to do that, we need an extension of the concept of compact convergence to dynamic domains due to
\cite{ConwayII}, Chapter 15, Definition 4.2. Denote by $(\Omega_n)_{n\in\N}$ a sequence of 
regions with kernel $\Omega$. Furthermore, $0\in\Omega_n$ for all $n\in\N$.
A sequence $f_n:\Omega_n\rightarrow \C$ is said to be locally uniformly convergent on the sequence
$(\Omega_n)_{n\in\N}$ to a function $f:\Omega\rightarrow\C$ if for all $\epsilon>0$ and all 
compact subsets $K$ of $\Omega$ exists a $n_0\in\N$ such that $|f_n(z)-f(z)|<\epsilon$ for all
$z\in K$ and all $n\ge n_0$.
\begin{proposition} \label{Pro:ComConf}
	Let $t_0,\tau_0\in[0,T]$ be fixed and denote by ${(t_n)}_{n\in\N}$ and ${(\tau_n)}_{n\in\N}$
	two sequences (in $[0,T]$) that converge to $t_0$ and $\tau_0$, respectively.
	Then the sequence of functions ${(f_{k,t_n,\tau_n})}_{n\in\N}$ converges locally uniformly to 
	$f_{k;t_0,\tau_0}$ on $(\Omega_k(t_n,\tau_n))_{n\in\N}$.
\end{proposition}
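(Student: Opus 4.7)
The plan is to reduce the statement to a multiply connected version of Carathéodory's kernel convergence theorem. Since each $f_{k;t_n,\tau_n}$ is the unique conformal map of $\Omega_k(t_n,\tau_n)$ onto a circularly slit disk normalized by $f(0)=0$ and $f'(0)>0$, the kernel-convergence theorem for standard multiply connected domains cited from Conway, Chapter 15, directly yields locally uniform convergence of $f_{k;t_n,\tau_n}$ to the normalized map of the kernel domain. It therefore suffices to show that the kernel of $(\Omega_k(t_n,\tau_n))_{n\in\N}$ with respect to $0$ is exactly $\Omega_k(t_0,\tau_0)$.

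First I would verify the compact-exhaustion half of the kernel property: every compact $K\subset \Omega_k(t_0,\tau_0)$ lies in $\Omega_k(t_n,\tau_n)$ for all sufficiently large $n$. Such a $K$ satisfies $\dist(K,\Gamma_k(t_0,\tau_0))>0$ and $\dist(K,\partial\Omega)>0$. Since each $\gamma_j$ is uniformly continuous on $[0,T]$, the slit set $\Gamma_k(t_n,\tau_n)$ is contained in an arbitrarily small neighborhood of $\Gamma_k(t_0,\tau_0)$ once $t_n,\tau_n$ are close enough to $t_0,\tau_0$; in particular $K\cap\Gamma_k(t_n,\tau_n)=\emptyset$ eventually. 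This shows that $\Omega_k(t_0,\tau_0)$ is contained in the kernel.

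For the converse, I would show that every $z_0\in \Gamma_k(t_0,\tau_0)$ fails to lie in the kernel. If $z_0=\gamma_j(s)$ for some $j\ne k$ with $0<s<\tau_0$ (or $j=k$ with $0<s<t_0$), then $z_0$ itself is already removed from $\Omega_k(t_n,\tau_n)$ whenever $\tau_n>s$ (resp.\ $t_n>s$), which holds for $n$ large; hence no neighborhood of $z_0$ can sit inside $\Omega_k(t_n,\tau_n)$ eventually. The delicate case is that of the tips $\gamma_k(t_0)$ and $\gamma_j(\tau_0)$: here I would exploit simplicity and continuity of the slits to deduce that any neighborhood $U$ of, say, $\gamma_k(t_0)$ contains $\gamma_k((t_0-\epsilon,t_0))$ for some $\epsilon>0$, and these points belong to the removed slit $\gamma_k((0,t_n])$ as soon as $t_n>t_0-\epsilon$, so $U\not\subset\Omega_k(t_n,\tau_n)$ for large $n$.

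The main obstacle is this tip analysis, because one-sided approach $t_n\to t_0^-$ or $\tau_n\to\tau_0^-$ leaves the tip itself inside the domains $\Omega_k(t_n,\tau_n)$, so one cannot argue via the tip as a single point; only the fact that every neighborhood of the tip \emph{also} meets the backward piece of the curve rules out its membership in the kernel. Once the identification of the kernel is complete, the kernel convergence theorem from Conway delivers the claimed locally uniform convergence of $f_{k;t_n,\tau_n}$ to $f_{k;t_0,\tau_0}$.
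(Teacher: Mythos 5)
Your identification of the kernel of $(\Omega_k(t_n,\tau_n))_{n\in\N}$ as $\Omega_k(t_0,\tau_0)$ is correct and in fact more careful than the paper, which disposes of this point in one sentence; your tip analysis for one-sided approach is the right thing to worry about there. The gap is in the other half: you outsource the entire analytic content of the proposition to a ``kernel-convergence theorem for standard multiply connected domains'' that ``directly yields'' locally uniform convergence of the normalized slit-disk maps. No such off-the-shelf implication is available in the form you invoke it. The classical Carath\'eodory kernel theorem (and its versions in Conway, Ch.~15.4) concerns maps \emph{from} a fixed disk, or runs in the direction ``locally uniform convergence of the functions $\Rightarrow$ kernel convergence of the image domains''; the direction you need --- kernel convergence of varying $n$-connected source domains $\Rightarrow$ convergence of the canonical maps onto circularly slit disks --- is precisely what the proposition asserts and must be proved.

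The paper supplies exactly the missing steps, and you would need some version of them: (i) normality of the family $h_n=f_{k;t_n,\tau_n}$ via Montel and a diagonal argument over a compact exhaustion of the kernel; (ii) the lower bound $h_n'(0)\ge 1$ from the extremal characterization of the canonical map, which forces any locally uniform limit $h_0$ to be nonconstant, hence injective by Hurwitz; (iii) kernel convergence of the image domains $D_k(t_n,\tau_n)$ (here the extended Carath\'eodory theorem does enter, but only as one ingredient) together with the fact that the kernel of a sequence of $n$-connected circularly slit disks containing $0$ is again a circularly slit disk; and (iv) uniqueness of the normalized map onto a circularly slit disk to identify $h_0\equiv f_{k;t_0,\tau_0}$, followed by the standard subsequence argument to upgrade from subsequential to full convergence. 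Without (i)--(iv), or a precise citation that genuinely contains them, your reduction is circular.
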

\begin{proof}
	Since there is no risk of confusion, we omit the index $k$.
	Note that $\Omega(t_n,\tau_n)\rightarrow \Omega(t,\tau)$ if $n\rightarrow\infty$, i.e.
	$\Omega(t_0,\tau_0)$ is the kernel of the sequence $(\Omega(t_n,\tau_n))_{n\in\N}$.
	We will prove that the sequence $h_n:=f_{t_n,\tau_n}$ is normal. 
	Let $(h_{n_j})_{j\in\N}$ be a subsequence of $(h_{n})_{n\in\N}$. We show that we find
	a subsequence $(m_j{)}_{j\in\N}$ of $(n_j)_{j\in\N}$ so that for every
	compact set $K\subset \Omega(t_0,\tau_0)$ and for every $\epsilon$ there is a $j_0\in\N$ 
	so that $|h_{m_j}(z)-h_0(z)|<\epsilon$ holds for all $z\in K$ and all $j\ge j_0$.

	Denote by $(K_n)_{n\in\N}$ a sequence of compact sets with $K_n\subset 
	\Omega(t_n,\tau_n)$, $K_n\subset \mathring K_{n+1}$ and 
	$\bigcup_{n\in\N} K_n= \Omega(t_0,\tau_0)$. By Montel's 
	theorem we find a subsequence $(m_{j,1})_{j\in\N}$ of $(n_j)_{j\in\N}$ 
	such that $h_{m_{j,1}}$ converges uniformly
	to an analytic function $h_0$ on $K_1$. Again we find by Montel's theorem a subsequence
	$(m_{j,2})_{j\in\N}$ of $(m_{j,1})_{j\in\N}$ that converges uniformly to $h_0$ on 
	$K_2$. This can be done for every compact set $K_n$, so $h_0$ is an analytic function on
	$\Omega(t_0,\tau_0)$. Thus we define $m_j:=m_{j,j}$, so this subsequence fulfils the 
	condition as stated before.
		
	Next we will show $h_0\equiv f_{t_0,\tau_0}$. By using the extremal property of the 
	functions $h_n$, we can see $h'_n(0)\ge 1$ for all $n\in\N$.
	More details concerning the extremal property of the functions $h_n$ can be found in the
	proof of Proposition \ref{Pro:Monfunf}.
	Thus the function $h_0$ is nonconstant and injective by Hurwitz's theorem. By using an
	extended version of Caratheodory's kernel theorem, we get 
	$D(t_{m_j},\tau_{m_j})\rightarrow\tilde D$, where the limit is in the sense of kernel 
	convergence. Further details can be found in \cite{ConwayII}, Chapter 15.4. 
	Since $\tilde D$ is $n$-connected and represents the kernel of the sequence 
	$\big(D(t_{m_j},\tau_{m_j}\big)_{j\in\N}$, which includes zero, $\tilde D$ is necessarily 
	a circularly slit disk. See \cite{ConwayII}, Chapter 15, Lemma 7.6.
	Summarizing, $h_0$ is a function from $\Omega(t_0,\tau_0)$ onto a circularly slit disk 
	with $h_0(0)=0$ and $h_0'(0)>0$. By using the uniqueness of the extended Riemann mapping 
	theorem we get $h_0\equiv f_{t_0,\tau_0}$.

	As all subsequences $(h_{m_j})_{j\in\N}$ converge to the same function $h_0$ also the whole
	sequence $(h_n)_{n\in\N}$ converges locally uniformly to $h_0$ on 
	$(\Omega(t_n,\tau_n))_{n\in\N}$.
\end{proof}
For later use we define $\xi_k(t,\tau):=f_{k;t,\tau}\big( \gamma_k(t) \big)\in\partial \D$ and 
$\xi_k(t):= \xi_k(t,t)$ as the image of the tip of $\gamma_k([0,t])$ under $f_{k;t,\tau}$,
where $\tau,t\in[0,T]$.
Furthermore, we introduce the abbreviations:
\[
	S_{k;\lt,\gt, \tau}:= f_{k;\lt,\tau}\Big(\gamma_k\big([\lt,\gt] \big)\Big)\subset \D\cup 
		\{\xi_k(\lt,\tau)\},\quad
	s_{k;\lt,\gt, \tau}:= f_{k;\gt,\tau}\Big(\gamma_k\big([\lt,\gt] \big)\Big)\subset\partial\D,
\]
where $0\le\lt \le\gt \le T$ and $\tau,\tau_0 \in [0,T]$.
We note that all the images are with respect to prime ends.
An important fact is given by the following proposition, which describes the movement of the 
functions $\xi_k(t,\tau)$ for fixed $k=1,\ldots,m$. Moreover we will state
an arc convergence of $s_{k;\lt,\gt, \tau}$ and a slit convergence of $S_{k;\lt,\gt, \tau}$ 
in this proposition.
\begin{proposition} \label{Pro:ConMovFXi}
	Let $k\in\{1,\ldots, m\}$ be fixed.
	With the notations from above, the function $(t,\tau)\mapsto \xi_k(t,\tau)$ is continuous in 
	$[0,T]\times[0,T]$. Moreover
	\begin{align*}
		&S_{k;\lt,\gt,\tau} \rightarrow \xi_k(\gt,\tau_0) \text{ as }(\lt,\tau)
			\rightarrow (\gt,\tau_0)\text{ } (\text{where }\lt\nearrow \gt)\\ 
		&s_{k;\lt,\gt,\tau} \rightarrow \xi_k(\lt,\tau_0) \text{ as }(\gt,\tau)
			\rightarrow (\lt,\tau_0)\text{ } (\text{where }\gt\searrow \lt).
	\end{align*}
\end{proposition}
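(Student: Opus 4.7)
My plan is to reduce all three assertions to a single boundary-continuity principle for the family $(f_{k;t,\tau})$, which upgrades the interior local-uniform convergence of Proposition \ref{Pro:ComConf} to continuity at the prime-end boundary points $\gamma_k(s)$.

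\textbf{Boundary-continuity principle.} I would prove: for every $(t_n,\tau_n) \to (t_0,\tau_0)$ in $[0,T]^2$ and $s_n \to s_0$ in $[0,T]$, one has $f_{k;t_n,\tau_n}(\gamma_k(s_n)) \to f_{k;t_0,\tau_0}(\gamma_k(s_0))$, where the values are read as prime-end images on $\partial\D$ whenever $s_n \le t_n$ (respectively $s_0 \le t_0$). The argument is a crosscut device: given $\varepsilon > 0$, pick in $\Omega$ a short crosscut $\sigma$ around $\gamma_k(s_0)$ of spatial diameter at most $\varepsilon$, disjoint from all other slits $\gamma_j$ and from the inner boundary components of $\Omega$. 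For $n$ large, $\sigma \subset \Omega_k(t_n,\tau_n)$ and $\gamma_k(s_n)$ lies in the small component bounded by $\sigma$. By Proposition \ref{Pro:ComConf}, $f_{k;t_n,\tau_n}(\sigma) \to f_{k;t_0,\tau_0}(\sigma)$ uniformly on the compact set $\sigma$. The limit is a crosscut in $D_k(t_0,\tau_0)$ enclosing a neighborhood $V_\varepsilon$ of $f_{k;t_0,\tau_0}(\gamma_k(s_0))$ whose diameter vanishes as $\varepsilon \to 0$ (boundary continuity of the single conformal map $f_{k;t_0,\tau_0}$). Hence $f_{k;t_n,\tau_n}(\gamma_k(s_n))$ is eventually in a small neighborhood of $f_{k;t_0,\tau_0}(\gamma_k(s_0))$, and letting $\varepsilon \to 0$ yields the claim.

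\textbf{Deriving the three statements.} The continuity of $(t,\tau) \mapsto \xi_k(t,\tau)$ follows by choosing $s_n = t_n$. For statement $2$, any accumulation point of $S_{k;\lt_n,\gt,\tau_n}$ with $\lt_n \nearrow \gt$ and $\tau_n \to \tau_0$ has the form $\lim_n f_{k;\lt_n,\tau_n}(\gamma_k(s_n))$ for some $s_n \in [\lt_n,\gt]$; since necessarily $s_n \to \gt$, the principle identifies the limit as $\xi_k(\gt,\tau_0)$. Statement $3$ is symmetric: take $s_n \in [\lt,\gt_n]$ with $\gt_n \searrow \lt$, whence $s_n \to \lt$; here the $\gamma_k(s_n)$ are boundary prime ends of $\Omega_k(\gt_n,\tau_n)$, their images lie on $\partial\D$, and the common limit is $\xi_k(\lt,\tau_0)$.

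\textbf{Main obstacle.} The delicate step is the principle's case $s_n \le t_n$, where $\gamma_k(s_n)$ is an accessible prime end rather than an interior point. One must choose the crosscut $\sigma$ uniformly in $n$—using simplicity and mutual disjointness of the $\gamma_j$ together with separation from the inner slits of $\Omega$—and verify that the image crosscuts $f_{k;t_n,\tau_n}(\sigma)$ in the varying circular slit disks $D_k(t_n,\tau_n)$ enclose the prime-end images consistently on the tip side. Both points rely on kernel convergence of $(D_k(t_n,\tau_n))$ to a circular slit disk, which was already established in the proof of Proposition \ref{Pro:ComConf}.
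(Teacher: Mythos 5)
Your reduction of the three assertions to a single boundary-continuity statement is reasonable, and the identification of the limits of $S_{k;\lt,\gt,\tau}$ and $s_{k;\lt,\gt,\tau}$ via accumulation points is fine (modulo the remark that for $s_0<t_0$ the point $\gamma_k(s_0)$ carries two distinct prime ends, so your principle is only well posed for $s_0=t_0$, which is all you actually use). The genuine gap is in the crosscut device itself. A set $\sigma$ that separates the boundary point $\gamma_k(s_n)$ (case $s_n\le t_n$) from the base point $0$ inside $\Omega_k(t_n,\tau_n)$ cannot be a closed curve contained in $\Omega_k(t_n,\tau_n)$: such a curve would have to be disjoint from the connected slit $\gamma_k\big((0,t_n]\big)$, which joins the encircled point $\gamma_k(s_n)$ to $\gamma_k(0)\in\partial\D$ lying outside $\sigma$. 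Hence $\sigma$ must be a genuine crosscut with endpoints on the slit (or on $\partial\D$, in which case it cannot have small diameter while enclosing the tip). But then $\sigma$ is not a compact subset of the kernel $\Omega_k(t_0,\tau_0)$, so Proposition \ref{Pro:ComConf} does not give uniform convergence of $f_{k;t_n,\tau_n}$ on $\sigma$ up to its endpoints; worse, the endpoints of the image crosscuts are prime-end images on $\partial\D$ of points of $\gamma_k$, and controlling those is exactly the boundary continuity you are trying to prove, so the argument is circular there. The same difficulty appears in the case $s_n>t_n$ with $s_n\to t_0$: the points $\gamma_k(s_n)$ are interior but do not stay in any compact subset of the kernel.

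The paper avoids this by first applying the fixed map $f_{k;\gt,\tau_0}$, so that the relevant boundary piece becomes an arc $s_{k;\lt,\gt,\tau_0}$ of the unit circle shrinking to $\xi_k(\gt,\tau_0)$ (continuity of a single, fixed conformal map at the tip), and then studying the transition maps $f_{k;\lt,\tau}\circ f_{k;\gt,\tau_0}^{-1}$, which extend analytically by Schwarz reflection across $\partial\D$ away from that small arc; uniform convergence to the identity on $\partial B_\epsilon\big(\xi_k(\gt,\tau_0)\big)$ — now legitimately inside the domain of analyticity of the extended maps — then traps $S_{k;\lt,\gt,\tau}$ in $B_\epsilon$. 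To repair your argument you would need either this reflection step in the disk picture, or a quantitative substitute such as a length--area (Wolff-type) choice of a good radius producing a crosscut whose image is uniformly small independently of any convergence of the maps on it. As written, the step asserting that $f_{k;t_n,\tau_n}\to f_{k;t_0,\tau_0}$ uniformly on $\sigma$ by Proposition \ref{Pro:ComConf} is not justified.
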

\begin{proof}
	Since there is no risk of confusion, we omit the index $k$.
	We will only show $S_{\lt,\gt,\tau} \rightarrow \xi(\gt,\tau_0)$ as $(\lt,\tau)
	\rightarrow (\gt,\tau_0)$ (where $\lt\nearrow \gt$). The other case $s_{\lt,\gt,\tau}
	\rightarrow \xi(\lt,\tau_0)$ as $(\gt,\tau)\rightarrow (\lt,\tau_0)$ 
	(where $\gt\searrow \lt$) can be proven in the same way.
	Since $\xi(\lt,\tau)\in S_{\lt,\gt,\tau}$ and $\xi(\gt,\tau)\in s_{\lt,\gt,\tau}$, the 
	continuity of $\xi$ follows immediately.
	\begin{center}
		\includegraphics[scale=\scalefactor]{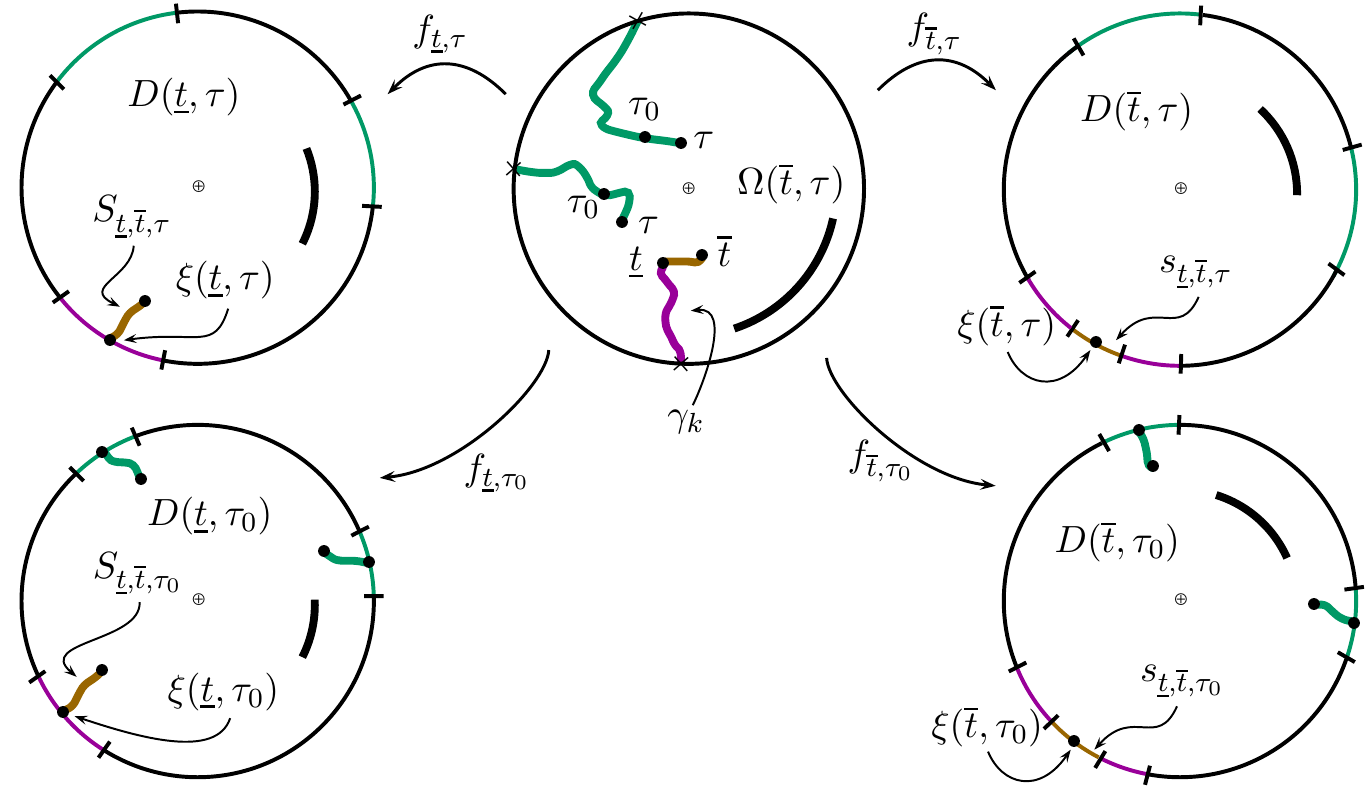} 
	\end{center}
	As described before, we will show that for every $\epsilon >0$ there is a $\delta>0$ with 
	$S_{\lt,\gt,\tau}\subset B_{\epsilon}(\xi(\gt,\tau_0))$ for all $\lt\in[\gt-\delta, \gt]$ and
	$\tau\in[\tau_0-\delta,\tau_0+\delta]$.\\
	Since the function $f_{\gt,\tau_0}$ does not depend on $\lt$ we can can find for every 
	$\epsilon>0$ a $\delta_1>0$ with $s_{\lt,\gt,\tau_0}\subset B_\epsilon(\xi(\gt,\tau_0))$
	for all $\lt\in[\gt-\delta_1,\gt]$. Moreover, the function 
	$f_{\lt,\tau}\circ f_{\gt,\tau_0}^{-1}$ converges by Proposition \ref{Pro:ComConf} 
	locally uniformly to the identity if $(\lt,\tau)$ tends to $(\gt,\tau_0)$. 
	By using the Schwarz reflection principle, we can see that these functions can be extended 
	analytically to $\overline{B_\epsilon(\xi(\gt,\tau_0))}\setminus s_{\lt,\gt,\tau_0}$, 
	if $\epsilon$ and $|\gt-\lt|$ are small enough. 
	Considering the uniform convergence on $\partial B_\epsilon$, we can find a 
	$\delta\in(0,\delta_1)$ with $S_{\lt,\gt,\tau}\subset 
	B_{\epsilon}(\xi(\gt,\tau_0))$ for all $\tau \in[\tau_0-\delta,\tau_0+\delta]$ and all 
	$\lt\in[\gt-\delta,\gt]$.
\end{proof}
Another useful property concerning the logarithmic mapping radius of the mapping $f_{k;t,\tau}$ is 
given by the following proposition.
\begin{proposition} \label{Pro:Monfunf}
	The function $(t,\tau)\mapsto f_{k;t,\tau}'(0)$ is strictly 
	increasing in each variable for fixed $k=1,\ldots,m$.
\end{proposition}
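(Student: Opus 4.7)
The plan is to exploit the extremal property of the normalized circular slit disk mapping: for any finitely connected domain $D\ni 0$, among all univalent $h:D\to\D$ with $h(0)=0$ and $h'(0)>0$, the unique normalized conformal map onto a circular slit disk (with outer boundary corresponding to $\partial\D$) strictly maximizes $h'(0)$; cf.\ \cite{ConwayII}, Chapter 15. Since both $f_{k;t,\tau}$ are such extremal maps on the domains $\Omega_k(t,\tau)$, and these domains are nested as $(t,\tau)$ grows in either variable, monotonicity should follow from this variational characterization, with strictness coming from the uniqueness clause.

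I would treat the two variables separately but symmetrically. Fix $k$ and $\tau\in[0,T]$, and let $t_1<t_2$. Since $\Omega_k(t_2,\tau)=\Omega_k(t_1,\tau)\setminus\gamma_k((t_1,t_2])\subsetneq \Omega_k(t_1,\tau)$, the restriction $h:=f_{k;t_1,\tau}\big|_{\Omega_k(t_2,\tau)}$ is univalent into $\D$ with $h(0)=0$ and $h'(0)>0$, so the extremal property applied on $\Omega_k(t_2,\tau)$ yields $f'_{k;t_2,\tau}(0)\ge f'_{k;t_1,\tau}(0)$. The main step is ruling out equality. If it held, uniqueness of the extremal map would force $h\equiv f_{k;t_2,\tau}$, so the image $D_k(t_1,\tau)\setminus f_{k;t_1,\tau}(\gamma_k((t_1,t_2]))$ would itself be a circular slit disk with outer boundary $\partial\D$. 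However, $\gamma_k(t_1)$ is a prime end on the outer boundary of $\Omega_k(t_1,\tau)$, and by Proposition \ref{Pro:ConMovFXi} its image $\xi_k(t_1,\tau)$ lies on $\partial\D$; hence $f_{k;t_1,\tau}(\gamma_k((t_1,t_2]))$ is a nondegenerate simple curve in $\D$ whose closure meets $\partial\D$ at $\xi_k(t_1,\tau)$. Removing such a curve from $D_k(t_1,\tau)$ turns a piece of the round outer boundary into an inward protruding slit, so the resulting domain is not a circular slit disk, a contradiction.

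The argument for strict monotonicity in $\tau$ is completely analogous: for $\tau_1<\tau_2$ the extra removed set $\bigcup_{j\neq k}\gamma_j((\tau_1,\tau_2])$ is a disjoint union of simple curves each attached (in the closure) to the outer boundary of $\Omega_k(t,\tau_1)$, so their images under $f_{k;t,\tau_1}$ are curves in $\D$ whose closures meet $\partial\D$, and the same contradiction applies. The only conceptual obstacle is to identify the extra removed pieces as belonging to the outer boundary component rather than to some inner slit of $\Omega$; this is immediate from the construction of $\Gamma_k(t,\tau)$, where every $\gamma_j$ starts on $\partial\D$, together with Proposition \ref{Pro:ConMovFXi}.
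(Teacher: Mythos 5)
Your argument is correct and rests on the same foundation as the paper's proof: the extremal characterization of $f_{k;t,\tau}$ as the maximizer of $f'(0)$ among univalent maps of $\Omega_k(t,\tau)$ into $\D$ fixing $0$ with positive derivative (the paper cites Nehari, Ch.\ VII, Sec.\ 2, for exactly this). Where you diverge is in how strictness is extracted. You argue by contradiction from the uniqueness of the extremizer -- equivalently, from the fact that any extremal map must carry the domain onto a circular slit disk -- and then observe that $D_k(t_1,\tau)\setminus f_{k;t_1,\tau}\big(\gamma_k((t_1,t_2])\big)$ cannot be a circular slit disk, since the removed set is a nondegenerate arc in $\D$ whose closure is attached to $\partial\D$, so the outer boundary component of the image strictly contains the unit circle. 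The paper instead manufactures an explicitly better competitor: it takes the Riemann map $h$ of the simply connected domain bounded by the outer boundary component of $f_{k;t_1,\tau}\big(\Omega_k(t_2,\tau)\big)$, obtains $h'(0)>1$ from the Schwarz lemma applied to $h^{-1}$, and feeds $h\circ f_{k;t_1,\tau}$ into the extremal property. The trade-off: the paper's route uses only the inequality half of the extremal property plus the Schwarz lemma, while yours requires the uniqueness (or structure) of the extremizer, a stronger but equally classical input, and in exchange avoids introducing the auxiliary map $h$. Two minor remarks: the fact that $\xi_k(t_1,\tau)\in\partial\D$ is really a consequence of boundary correspondence (the slit $\gamma_k((0,t_1])$ lies on the outer boundary component of $\Omega_k(t_1,\tau)$, which $f_{k;t_1,\tau}$ sends to $\partial\D$) rather than of Proposition~\ref{Pro:ConMovFXi}, which only concerns continuity; and your contradiction would survive even without locating the endpoint on $\partial\D$, since removing a nondegenerate continuum from the interior of a circular slit disk would otherwise raise the connectivity, which is likewise incompatible with the image being $D_k(t_2,\tau)$.
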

\begin{proof}
	Since there is no risk of confusion, we omit the index $k$.
	First of all we will discuss a very important extremal property concerning the function 
	$f_{t,\tau}$. We denote
	\[
		\mathcal{F}(t,\tau):=\{ f: \Omega(t,\tau)\rightarrow \D\,|\, f\text{ univalent},\, 
		 f(0)=0,\, f'(0)>0\}.
	\]
	Then $f_{t,\tau}$ fulfills the extremal property as follows:
	\[
		f_{t,\tau}'(0)= \max_{f\in\mathcal{F}} f'(0).
	\]
	Further details can be found in \cite{Nehari}, Chapter VII, Section 2.\\[\lenpar]
	Let $0\le\lt<\gt\le T$ and $\tau_0\in[0,T]$. 
	Denote the outer boundary component of the domain $f_{\lt,\tau_0}(\Omega(\gt,\tau_0))$ 
	by $A$. As $\text{int}(A)\varsubsetneq \D$, there is a unique conformal mapping 
	$h: \text{int}(A) \rightarrow \D$ with $h(0)=0$ and $h'(0)>0$. Since $h^{-1}$ fulfills the 
	condition of the Schwarz Lemma, we necessarily get $h'(0)>1$. Thus we have 
	$h\circ f_{\lt,\tau_0}\in \mathcal{F}(\gt,\tau_0)$. This implies 
	\[
		 h'(0) \cdot f'_{\lt,\tau_0}(0) = (h\circ f_{\lt,\tau_0})'(0)  \le f_{\gt,\tau_0}'(0),
	\]
	by using the extremal property. 
	Finally, we have $f'_{\lt,\tau_0}(0) < f_{\gt,\tau_0}'(0)$.\\
	The monotonicity in the second variable $\tau$ can be proven in the same way.
\end{proof}
As an immediate consequence (by setting $\tau=t$) we get the strict monotonicity of the 
function $t\mapsto g_t'(0)$.
		\section{Proof of Theorem \ref{The:LoKCSDMul}}\label{Sec:Proof1}
By adapting the methods in \cite{BauerFriedrichCSD} to multiple slits, we get easily 
\begin{lemma} \label{Lem:PoiForBF}
	Denote by $g_t$ the mapping function as mentioned before. Furthermore, let 
	$0\le \lt< \gt\le T$ and $g_{\gt,\lt}:=g_{\lt}\circ g_{\gt}^{-1}$. Thus we have
	\[
		\log \frac{g_{\gt,\lt}(z)}{z} = \frac{1}{2\pi} \sum_{k=1}^m \int_{\beta_k(\gt,\lt)} 
		\ln\big| g_{\gt,\lt}(\zeta) \big| \cdot \Phi(\zeta,z;\gt) |\intd \zeta|,
	\]
	where $z\mapsto\Phi(\zeta,z; \gt)$ denotes the unique conformal mapping that maps $D(\gt)$ onto 
	the right half plane minus $n-1$ slits parallel to the imaginary axis with the normalization 
	$0\mapsto 1$ and $\zeta\mapsto \infty$. The set $\beta_k(\gt,\lt)$ stands for image of the 
	prime ends of $\gamma_k([\lt,\gt])$ under the mapping $g_{\gt}$.
\end{lemma}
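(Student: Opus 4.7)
My plan is to follow the Bauer-Friedrich strategy from \cite{BauerFriedrichCSD}, Theorem~5.1, essentially verbatim; the only change needed for multiple slits is that the single boundary arc appearing there is replaced by the disjoint union $\bigcup_k \beta_k(\gt,\lt)$, which introduces the sum in the final formula but no new analytic difficulty.

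First I would set $h(z) := \log\bigl(g_{\gt,\lt}(z)/z\bigr)$ and check that it defines a single-valued holomorphic function on $D_{\gt}$, with the branch fixed by $h(0) = \log g_{\gt,\lt}'(0) \in \R$. Since $g_{\gt,\lt}$ is univalent on $D_{\gt}$, vanishes only at $0$, and maps each inner concentric slit of $D_{\gt}$ into a concentric slit of $D_{\lt}$, the winding number of $g_{\gt,\lt}(z)$ about $0$ along any loop surrounding an inner slit equals that of $z$, so the argument principle yields single-valuedness. Then I would read off the boundary values of $u(z) := \mathrm{Re}\,h(z) = \ln|g_{\gt,\lt}(z)| - \ln|z|$: on $\partial\D\setminus\bigcup_k\beta_k(\gt,\lt)$ both $|z|$ and $|g_{\gt,\lt}(z)|$ equal $1$, so $u \equiv 0$; on each inner slit of $D_{\gt}$ both moduli are constant, so $u$ is constant; and on the arcs $\beta_k(\gt,\lt)$, where $|z|=1$ but $|g_{\gt,\lt}(z)|\le 1$, one has $u(\zeta) = \ln|g_{\gt,\lt}(\zeta)|$.

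I would then obtain the integral representation by a Schwarz-integral argument on $D_{\gt}$ using the kernel $\Phi(\zeta,\cdot;\gt)$, which plays the role of the Schwarz kernel adapted to a circular slit disk: it is purely imaginary on $\partial\D\setminus\{\zeta\}$, has constant real part on each inner slit (the image slits are parallel to the imaginary axis), possesses a simple pole at $\zeta$, and satisfies the normalization $\Phi(\zeta,0;\gt)=1$. Thickening the inner slits to thin Jordan tubes and applying Green's identity to the pair $h$ and $\Phi(\zeta,\cdot;\gt)$, the contributions from the two sides of each inner slit cancel in the limit because both $u$ and $\mathrm{Re}\,\Phi(\zeta,\cdot;\gt)$ are constant there; only the integrals over the arcs $\beta_k(\gt,\lt)$ survive, and the normalization at $z=0$ together with $h(0)\in\R$ pins down the additive constant to give the stated formula.

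The main obstacle is the rigorous justification of this cancellation in the limit. One needs a Schwarz-reflection-type extension of $h$ and $\Phi$ across the circular boundary components, together with uniform control of $\Phi$ near the tips and sides of the inner slits. This is exactly the delicate point treated in the proof of \cite{BauerFriedrichCSD}, Theorem~5.1, so only minor bookkeeping is added here.
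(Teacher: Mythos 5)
Your overall route coincides with the paper's (both follow Bauer--Friedrich, Theorem 5.1): set $h(z)=\log\bigl(g_{\gt,\lt}(z)/z\bigr)$, check single-valuedness, read off that $u=\Re h$ vanishes on $\partial\D\setminus\bigcup_k\beta_k(\gt,\lt)$ and is constant on each inner slit, represent $h$ by integrating its boundary values against the kernel $\Phi(\zeta,\cdot;\gt)$, and fix the imaginary constant via the normalization at $z=0$. Those parts of your proposal are fine.

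The gap is in the step where the inner boundary components are discarded. You assert that the contributions ``from the two sides of each inner slit cancel in the limit because both $u$ and $\Re\,\Phi(\zeta,\cdot;\gt)$ are constant there.'' That pointwise two-sided cancellation is false: the kernel is of the type $\partial G(\zeta,z;\gt)/\partial n_\zeta$ (plus a correction), and since the Green function vanishes on a slit while being positive on both sides of it, the outward normal derivatives from the two sides have the \emph{same} sign and add rather than cancel. What actually eliminates the inner components is a period argument, and this is the substance of the paper's proof: because $h$ is single-valued, $u$ has vanishing periods around each $C_j(\gt)$, which lets one add the harmonic-measure correction $\vec\omega(z,\gt)^T P_{\gt}^{-1}\,\partial\vec\omega(\zeta,\gt)/\partial n_\zeta$ to the Poisson kernel without changing the representation; the corrected kernel is exactly $-\Re\,\Phi(\zeta,z;\gt)$ and is built so that its integral over each $C_j(\gt)$ (the period of $\Im\,\Phi$ around $C_j(\gt)$) vanishes, so that ``constant $u$ on $C_j(\gt)$'' times ``zero total mass of the kernel on $C_j(\gt)$'' gives zero. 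Note also that in a multiply connected domain the naive Schwarz integral $\tfrac{1}{2\pi}\int u\,\Phi\,|\mathrm{d}\zeta|$ does \emph{not} reproduce $h$ for arbitrary harmonic boundary data; it does so only because $u$ has vanishing periods, so this point cannot be bypassed. This period/harmonic-measure mechanism (involving the period matrix $P_{\gt}$) is the one genuine idea in the lemma beyond bookkeeping, and it is missing from, indeed contradicted by, the justification you give.
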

\begin{proof}
	This proof is analogous to the one of Bauer and Friedrich (see \cite{BauerFriedrichCSD}, 
	Theorem 5.1), so most of the details can be found there.
	First of all we consider the function
	\[
		z\mapsto \log \frac{g_{\gt,\lt}(z)}{z}
	\]
	which is analytic in $D(\gt)$. We denote the boundary components of $D(\gt)$ by 
	$C_1(\gt),\ldots, C_n(\gt)$, where $C_n(\gt)=\partial \D$.
	\begin{center}
		\includegraphics[scale=\scalefactor]{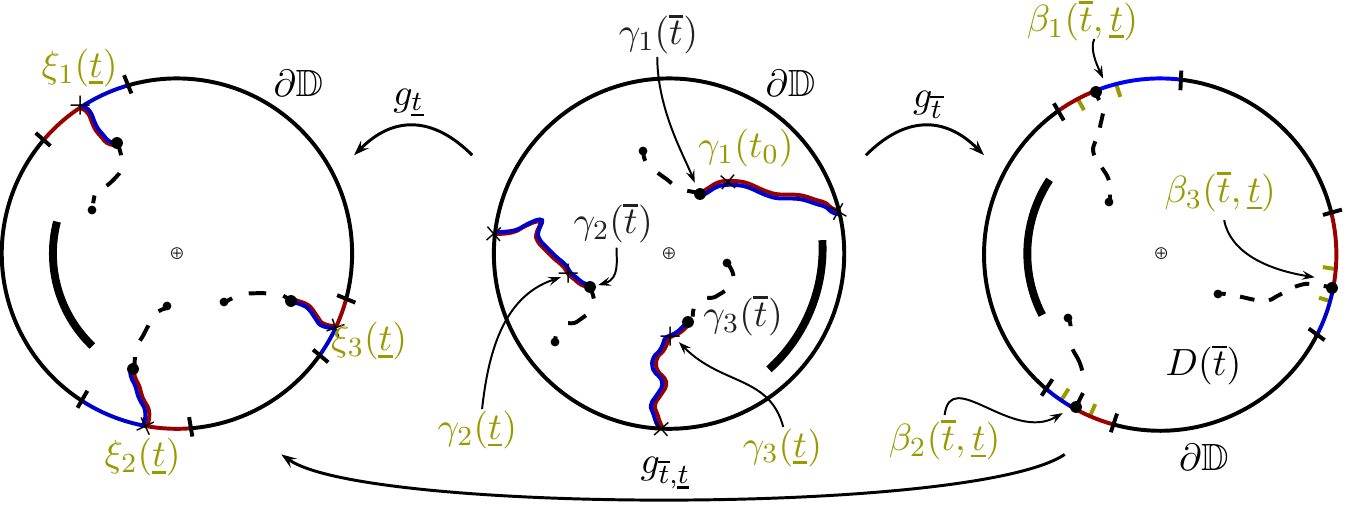}
	\end{center}
	Consequently $\ln|g_{\gt,\lt}(z)/z|$ is harmonic in $D(\gt)$ and by
	Poisson's formula we get
	\[
		\ln\left| \frac{g_{\gt,\lt}(z)}{z} \right| = 
		-\frac{1}{2\pi} \int_{\partial D(\gt)} \ln \left| \frac{g_{\gt,\lt}(\zeta)}{\zeta}\right|
		\frac{\partial G(\zeta,z,\gt)}{\partial n_\zeta} |\intd\zeta|,
	\]
	where $G(\zeta,z;\gt)$ denotes the Green function of $D(\gt)$ with pole at $z$. The left hand 
	side of the previous formula will be denoted as $u(z)$. Since $u$ is harmonic in $D(\gt)$ and 
	the real part of an analytic function, the periods with respect to $C_j(\gt)$ 
	($j=1,\ldots,n-1$) are zero. Thus we get for all $j=1,\ldots, n-1$
	\[ 
		0=\int_{C_j(\gt)} \frac{\partial u}{\partial n_\zeta}(\zeta) |\intd \zeta|
		=\int_{C_j(\gt)} \omega_j(\zeta,\gt) 
			\frac{\partial u}{\partial n_\zeta}(\zeta) |\intd \zeta|
		= \int_{C_j(\gt)} u(\zeta) \frac{\partial \omega_j(\zeta,\gt)}{\partial n_\zeta} 
			|\intd \zeta|,
	\]
	where $\omega_j(\zeta,\gt)$ denotes the harmonic measure with 
	$\omega_j(\zeta,\gt)=\delta_{j,k}$ for $\zeta\in C_k(\gt)$. 
	Note that the last equation is an application of Green's theorem. 
	By combining these two equations we find
	\[
		\ln\left|\frac{g_{\gt,\lt}(z)}{z}\right| = 
		-\frac{1}{2\pi} \int_{\partial D(\gt)} u(\zeta) \left( \frac{\partial G(\zeta,z;\gt)}{
		\partial n_\zeta} + \vec \omega(z,\gt)^T P_{\gt}^{-1} \frac{\partial \vec\omega(\zeta,\gt)}{
		\partial n_\zeta} \right) |\intd \zeta|,
	\]
	where $\vec \omega:= (\omega_1,\ldots,\omega_{n-1})^T$ denotes the harmonic measure vector.
	The matrix $P_{\gt}$ contains the periods of the harmonic measures $w_j$ with respect to 
	$C_k(\gt)$. This matrix is symmetric and positive definite, so the inverse matrix
	exists, see \cite{Nehari}, page 39.
	Furthermore, the function 
	\[
		z\mapsto -\frac{\partial G(\zeta,z;\gt)}{\partial n_\zeta} - \vec \omega(z,\gt)^T 
		P_{\gt}^{-1} \frac{\partial \vec\omega(\zeta,\gt)}{\partial n_\zeta}
	\]
	has a single valued harmonic conjugate. 
	We denote the corresponding analytic function by $\Phi(\zeta,z;\gt)$, where 
	$\Phi(\zeta,z;\gt)$ is determined up to an arbitrary imaginary constant.
	Furthermore, $z\mapsto\Phi(\zeta,z;\gt)$ is an univalent conformal mapping that maps 
	$D(\gt)$ onto the right half plane minus $n-1$ slits parallel to the imaginary axis with
	$\zeta\mapsto\infty$. Details can be found in \cite{Courant}, page 257, or
	\cite{BauerFriedrichCSD}, equation (8).
	Consequently we get
	\[
		\log\left( \frac{g_{\gt,\lt}(z)}{z} \right)=  \frac{1}{2\pi} \int_{\partial D_t}
		\ln \left| \frac{g_{\gt,\lt}(\zeta)}{\zeta} \right| \Phi(\zeta,z;\gt) |\intd \zeta|
		+ \iu c,
	\]
	where $c\in\R$. Since $\ln|g_{\gt,\lt}(\zeta)/\zeta|$ is constant on each
	$C_j(\gt)$ ($j=1,\ldots,n-1$), $C_n(\gt)=\partial \D$ and $\zeta \mapsto G(\zeta,z;\gt) + 
	\vec \omega(z,\gt) P_{\gt}^{-1} \omega(\zeta,\gt)$ has vanishing periods we get
	\[
		\log\left( \frac{g_{\gt,\lt}(z)}{z} \right) = \frac{1}{2\pi}\sum_{k=1}^m 
		\int_{\beta_k(\gt,\lt)} \ln \big| g_{\gt,\lt}(\zeta) \big| 
		\Phi(\zeta,z;\gt) |\intd \zeta| + \iu c,
	\]
	where $\beta_k(\gt,\lt)$ denotes the image of the prime ends of $\gamma_k([\lt,\gt])$ under
	the mapping $g_{\gt}$. Finally, we will show $c=0$. For, we have 
	\[
		0> \ln(g'_{\lt}(0))- \ln(g'_{\gt}(0)) =\log\left(\left. 
		\frac{g_{\gt,\lt}(z)}{z}\right|_{z=0} \right)
	\]
	by normalization of the Riemann mapping function.
	Furthermore, we can choose $\Phi(\zeta,z;\gt)$ in a way that $\Phi(\zeta,0;\gt)=1$ holds.
	If we put both arguments together, we get $c=0$.
\end{proof}
\begin{proof}[Proof for $t\searrow t_0$]
	By using Lemma \ref{Lem:PoiForBF} we get
	\[
		\log \frac{g_{t,t_0}(z)}{z} = \frac{1}{2\pi} \sum_{k=1}^m \int_{\beta_k(t,t_0)} 
		\ln\left| g_{t,t_0}(\zeta) \right| \cdot \Phi(\zeta,z;t) |\intd \zeta|.
	\]
	Since the functions $\zeta\mapsto \ln|g_{t,t_0}(\zeta)|$ and $\zeta\mapsto\Phi(\zeta,z;t)$
	are continuous by Lemma \ref{Lem:ConvPhi} in $\beta_k(t,t_0)$ and $\ln|g_{t,t_0}(\zeta)|\le 0$, we deduce 
	\begin{multline*}
		\log \frac{g_{t,t_0}(z)}{z}=\\ \frac{1}{2\pi} \sum_{k=1}^m \Big(\Re\big(\Phi(
		\zeta_{t,t_0}^{k,1},z;t)\big) + \iu\,\Im\big( \Phi(\zeta_{t,t_0}^{k,2},z;t) \big)\Big)
		\int_{\beta_k(t,t_0)} \ln\left| g_{t,t_0}(\zeta) \right| |\intd \zeta|,
	\end{multline*}
	by using the mean value theorem
	with $\zeta_{t,t_0}^{k,j}\in\beta_k(t,t_0)$ for $j=1,2$.
	Denote the remaining integral by $c_k(t,t_0)$ for every $k=1,\ldots,m$.
	Thus we get by substitution
	\begin{align*}
		c_k(t,t_0) &= 
			\int_{\beta_k(t,t_0)} \ln|g_{t,t_0}(\zeta)|\,|\intd \zeta|\\
		&=\int_{\tilde \beta_k(t,t_0)} \ln|g_{t,t_0}(h_{k;t,t_0}(\zeta))|
			\cdot |h_{k;t,t_0}'(\zeta)|\,|\intd \zeta|\\
		&=\int_{\tilde\beta_k(t,t_0)} \ln|g_{k;t,t_0}(\zeta)|\cdot 
			|h_{k;t,t_0}'(\zeta)|\,|\intd \zeta|.
	\end{align*}
	with $g_{k;t,t_0}:=g_{t_0}\circ f_{k;t,t_0}^{-1}$,
	$h_{k;t,t_0}:= g_{t}\circ f_{k;t,t_0}^{-1}$ and $\tilde\beta_k(t,t_0):=h_{k;t,t_0}^{-1}
	\big(\beta_k(t,t_0)\big)$. We note that $h_{k;t,t_0}$ is analytic in $\tilde\beta_k(t,t_0)$ 
	by the Schwarz reflection principle, so the substitution holds.
	\begin{center}
		\includegraphics[scale=\scalefactor]{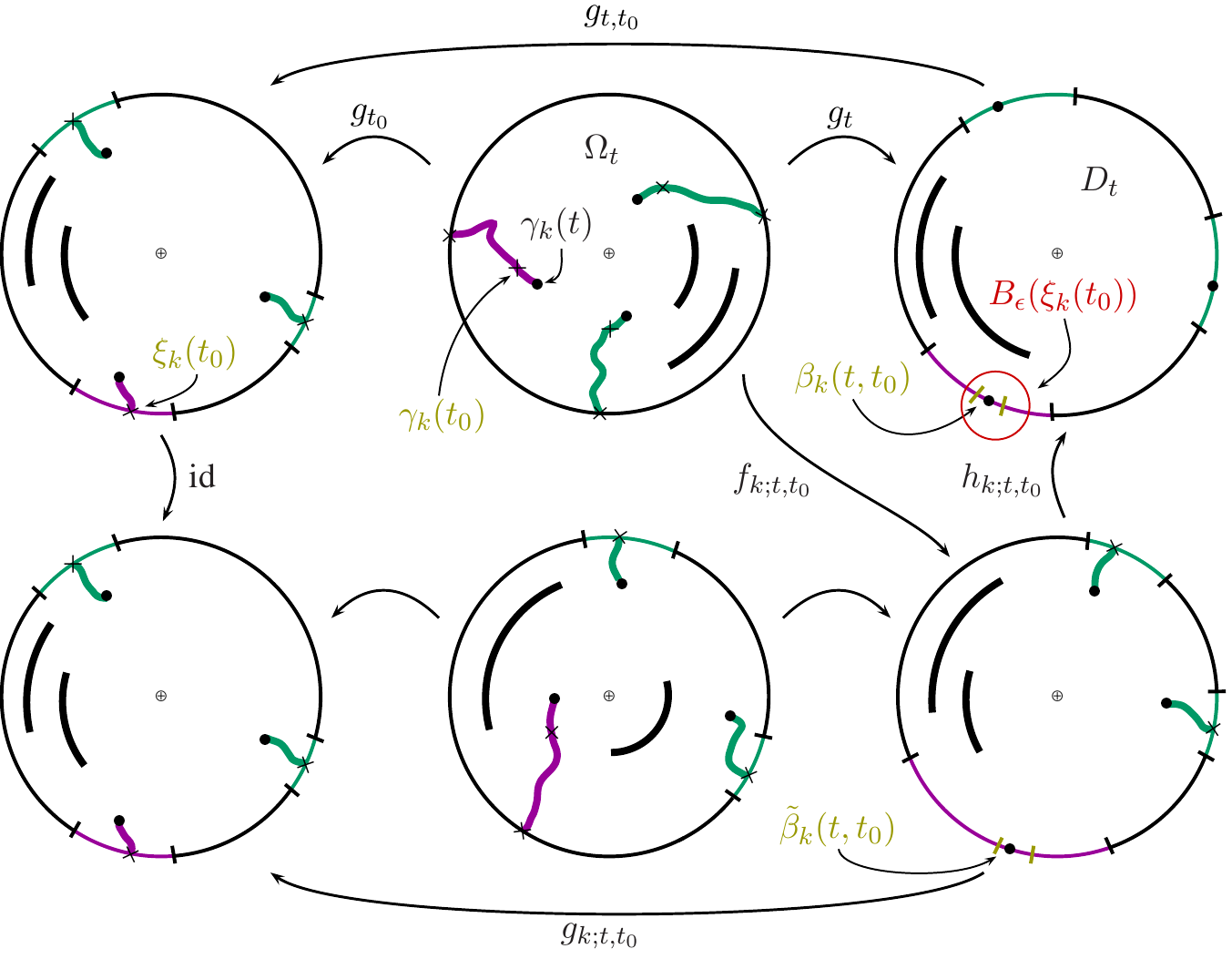}
	\end{center}
	Since the image of the function $g_{k;t,t_0}$ is a subset of $\D$ we have 
	$\ln|g_{k,t,t_0}(\zeta)|\le 0$ for every $\zeta\in \tilde\beta_k(t,t_0)$. 
	Furthermore, the functions
	$\zeta\mapsto h'_{k,t,t_0}(\zeta)$ and $\zeta\mapsto \ln|g_{k;t,t_0}(\zeta)|$ are continuous 
	in $\tilde\beta_k(t,t_0)$, so we get 
	\[
		c_k(t,t_0) = |h'_{k;t,t_0}(\tilde\zeta_{t,t_0})|\int_{\tilde\beta_k(t,t_0)} 
			\ln|g_{k;t,t_0}(\zeta)| \,|\intd \zeta|
	\]
	as an application of the mean-value theorem
	with $\tilde\zeta_{t,t_0}\in \tilde\beta_k(t,t_0)$.
	Furthermore we have $\lim_{t\searrow t_0} h'_{k;t,t_0}(\tilde\zeta_{t,t_0}) =  1$. 
	For by proposition \ref{Pro:ComConf} the functions $h^{-1}_{k;t,t_0}$
	tend to the identity locally uniformly on $D_t$ for $t\searrow t_0$
	and this compact convergence can be extended to a disk $B_\epsilon(\xi_k(t_0))$ for some
	$\epsilon>0$ with
	\[	
		|t-t_0|<\delta \Rightarrow B_\epsilon(\xi_k(t_0))\cap \partial D_t \subset 
		\partial\D
	\]
	(cf. \cite{ConwayII}, Lemma 7.6)
	for a sufficiently small $\delta>0$ by the Schwarz reflection principle.
	By use of Cauchy's theorem and the normalization of $g_{k;t,t_0}$ we see for the real value
	\begin{align*}
		\lmr(g_{k;t,t_0}) &= \log\left(\frac{\intd}{\intd z}\, g_{k;t,t_0}(z) 
				{\Big|}_{z=0}\right)
				= \log\left( \frac{g_{k;t,t_0}(z)}{z} \right){\Big|}_{z=0}\\
			&\hspace{-1.5cm}=\frac{1}{2\pi \iu}\int_{\partial  D_k(t,t_0)} \log\left(\frac{g_{k;t,t_0}(\zeta)}
				{\zeta}\right) \,\frac{\intd \zeta}{\zeta}
			=\frac{1}{2\pi }\int_{\partial D_k(t,t_0)} \log\left(\frac{
				g_{k;t,t_0}(\zeta)}{\zeta}\right) \,\intd \arg \zeta\\
			&\hspace{-1.5cm}= \frac{1}{2\pi }\int_{\partial D_k(t,t_0)} \ln\left|\frac{
				g_{k;t,t_0}(\zeta)}{\zeta}\right| \,\intd \arg \zeta.
	\end{align*}
	The boundary $\partial D_k(t,t_0)$ consists of $\partial\D $ and concentric slits. 
	We note, that the function $\ln\big| \frac{g_{k;t,t_0}(\zeta)}{\zeta}\big|$ is constant 
	on these concentric slits, so we obtain
	\begin{align}\tag{$\star$} \label{Equ:AppCauchy}
		\lmr(g_{k;t,t_0})  =
			\frac{1}{2\pi }\int_{\tilde\beta_k(t,t_0)} 
			\ln\left|\frac{g_{k;t,t_0}(\zeta)}{\zeta}\right| \,|\intd \zeta|
			= \frac{1}{2\pi }\int_{\tilde\beta_k(t,t_0)} 
			\ln|g_{k;t,t_0}(\zeta)| \,|\intd \zeta|.
	\end{align}
	as we integrate on both sides of each concentric slit and $\tilde\beta_k(t,t_0)\subset 
	\partial\D$.
	Since $g_{k;t_0,t_0}\equiv\text{id}$ we have
	\begin{align*}
		 c_k(t,t_0) &= 2\pi \lmr(g_{k;t,t_0})\cdot |h'_{k;t,t_0}(\tilde\zeta_{t,t_0})|\\
		 &=2\pi \big( \lmr(g_{k;t,t_0})-\lmr(g_{k;t_0,t_0})\big) 
		 |h'_{k;t,t_0}(\tilde\zeta_{t,t_0})|.
	\end{align*}
	Now we will use two useful properties concerning the logarithmic mapping radius. These are
	\[
		\lmr (f\circ g) = \lmr(f) + \lmr(g)\quad \text{ and }\quad \lmr(f^{-1})=-\lmr(f),
	\]
	with (in $z=0$) analytic and locally injective functions $g$ and $f$. Thus we get
	\[
		\lmr(g_{k;t,t_0}) = \lmr(f_{k;t,t_0}^{-1}) + \lmr(g_{t_0})
		= -\lmr(f_{k;t,t_0}) + \lmr(g_{t_0})
	\]
	Since $\lmr(g_{t_0})$ does not depend on $t$ it follows
	\[
		\lim_{t\searrow t_0} \frac{c_k(t,t_0)}{t-t_0} = 
		-2\pi \lim_{t\searrow t_0} \frac{\lmr( f_{k;t,t_0})-\lmr(f_{k;t_0,t_0})}{t-t_0}
		=-2\pi \lambda_k(t_0),
	\]
	as $|h'_{k;t,t_0}(\tilde\zeta_{t,t_0})|$ tends to 1 for $t\searrow t_0$. Summarizing we get
	\[
		\lim_{t\searrow t_0} \frac{\log \frac{g_{t_0}(w)}{g_t(w)}}{t-t_0} = \frac{1}{2\pi}
		\lim_{t\searrow t_0}\sum_{k=1}^m \Big[ \Re\big(\Phi(\zeta_{t,t_0}^{k,1},g_t(w);t)\big) + 
		\iu\Im\big(\Phi(\zeta_{t,t_0}^{k,2},g_t(w);t)\big)\Big]
		\frac{c_k(t,t_0)}{t-t_0}.
	\]
	By using Lemma \ref{Lem:ConvPhi} we can see that the functions
	$w\mapsto \Phi(\zeta_{t,t_0}^{k,j},g_t(w),t)$ converge locally uniformly to 
	$\Phi(\xi_k(t_0),g_{t_0}(w),t_0)$ if $t\searrow t_0$.
	Thus we get
	\[
		\lim_{t\searrow t_0} \frac{\log g_{t}(w)-\log g_{t_0}(w)}{t-t_0} = 
		 \sum_{k=1}^m \lambda_k(t_0) \Phi(\xi_k(t_0),g_{t_0}(w),t_0).
	\]
	Finally, by using the same calculation as in (\ref{Equ:AppCauchy}) we see
	\[
		\sum_{k=1}^m \lambda_k(t_0) = -\frac{1}{2\pi}\sum_{k=1}^m \lim_{t\searrow t_0}
		\frac{c_k(t,t_0)}{t-t_0} = - \lim_{t\searrow t_0} 
		\frac{\lmr(g_{t,t_0})}{t-t_0} = 1,
	\]
	where the last equation follows from the condition $g_t'(0)=e^t$ in some neighbourhood of $t_0$
	as assumed in the theorem.
\end{proof}
Before we can prove the other case we need a preliminary lemma.
\begin{lemma}\label{Lem:EstSetA}
	Let $\log\big(\frac{f(z)}{z} \big)$ be analytic on a compact set 
	$A(r_0,\theta_1,\theta_2)=\{z=r e^{\iu \theta}\in\C\,|\, r\in[r_0,1],\, \theta 
	\in[\theta_1,\theta_2] \}$, where $0<r_0<1$ and $|\theta_2-\theta_1|< 2\pi$. Furthermore,
	$|f(z)|=1$ for all $z\in\partial \D \cap A(r_0,\theta_1,\theta_2)$ and for some $\delta>0$
	the inequality
	\[
		\left|\frac{\intd }{\intd z} \log\left(\frac{f(z)}{z}\right)\right|<
		\delta
	\]
	holds for all $z\in A(r_0,\theta_1,\theta_2)$. Then the inequality 
	\[
		{|z|}^{1+\delta} \le |f(z)| \le {|z|}^{1-\delta}
	\]
	is true for all $z\in A(r_0,\theta_1,\theta_2)$.
\end{lemma}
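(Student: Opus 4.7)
My plan is to introduce the auxiliary function $h(z) := \log(f(z)/z)$, which is analytic on $A$ by hypothesis, and to exploit two facts: on the boundary arc $\partial\D\cap A$, the condition $|f(z)|=1=|z|$ forces $\mathrm{Re}(h(z)) = \log|f(z)/z| = 0$; and globally on $A$, the derivative bound gives $|h'(z)| < \delta$. The idea is then to control the real part of $h$ at an interior point by integrating $h'$ back out to the boundary arc, where we have exact information.

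Concretely, for $z = re^{\iu\theta}\in A$ I would integrate along the radial segment $s\mapsto se^{\iu\theta}$, $s\in[r,1]$, which stays in $A$ because $r\ge r_0$ and $\theta\in[\theta_1,\theta_2]$ are fixed. This yields the identity $h(e^{\iu\theta})-h(re^{\iu\theta}) = \int_r^1 h'(se^{\iu\theta})\,e^{\iu\theta}\,\intd s$. Taking real parts and using $\mathrm{Re}(h(e^{\iu\theta}))=0$ produces $\bigl|\log|f(z)/z|\bigr| \le \int_r^1 |h'(se^{\iu\theta})|\,\intd s < \delta(1-r)$.

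To convert this estimate into the stated bound I would invoke the elementary inequality $1-r\le -\log r$ valid on $(0,1]$, obtaining $\bigl|\log|f(z)/z|\bigr| \le -\delta\log|z|$. Rearranging gives $(1+\delta)\log|z| \le \log|f(z)| \le (1-\delta)\log|z|$, and exponentiation yields $|z|^{1+\delta}\le |f(z)|\le |z|^{1-\delta}$, as claimed. The argument is essentially direct; no step is truly delicate. The one slightly non-obvious move is the comparison $1-r\le \log(1/r)$, which is precisely what allows the linear bound $\delta(1-r)$ from the radial integration to match the logarithmic form of the conclusion; identifying that this bridging step is needed is really the only idea beyond setting up $h$ and integrating.
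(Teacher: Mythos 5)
Your argument is correct and is essentially the paper's own proof: the authors likewise integrate the derivative bound along the radial segment from $z=re^{\iu\theta}$ out to $e^{\iu\theta}$, use $\ln|f/z|=0$ on $\partial\D\cap A$, and pass from the linear bound to the logarithmic one via the comparison $\delta\le\delta/r$ (i.e.\ $1-r\le\ln(1/r)$). No substantive differences.
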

\begin{proof}
	Let $\gamma_\theta(r):=r\cdot e^{\iu \theta}$ with $r\in [r_0,1]$ and 
	$\theta\in[\theta_1,\theta_2]$. Denote by
	\[
		h_{\theta}(r):= \Re\left(\log \left(\frac{f
		(\gamma_\theta(r))}{\gamma_\theta(r)} \right) \right)=
		\ln \left|\frac{f(\gamma_\theta(r))}{\gamma_\theta(r)} \right|.
	\]
	Consequently the function $h_{\theta}$ is in $\CC{\infty}((r_0,1),\R)$. Thus we get
	\begin{align*}
		\left| \frac{\partial}{\partial r}{h}_\theta(r)\right| &= 
		\left|\Re\left(\left. {\frac{\intd}{\intd z} 
		\log\left( \frac{f(z)}{z} \right)}\right|_{z=\gamma_\theta(r)}
		\cdot \dot\gamma_\theta( r) \right)\right|\\
		&= \left|\Re\left(\left. {\frac{\intd}{\intd z} 
		\log\left( \frac{f(z)}{z} \right)}\right|_{z=\gamma_\theta(r)}
		\cdot e^{\iu \theta} \right)\right|\\
		&\le \left|\left. {\frac{\intd}{\intd z} 
		\log \left(\frac{f(z)}{z} \right)}\right|_{z=\gamma_\theta(r)}
		\right| \le \delta.
	\end{align*}
	Furthermore, we have $h_{\theta}(1)=0$, $\pm\delta \ln(1) = 0$ and $\delta \frac{1}{r}\ge 
	\delta$ (for all $r\in[r_0,1]$), so we get 
	\[
		\ln(r^\delta) =\delta \ln(r) \le h_{\theta}(r) \le -\delta \ln(r) = \ln(r^{-\delta}),
	\]
	for all $r\in[r_0,1]$ and $\theta\in[\theta_1,\theta_2]$.
	Since to every $z\in A(r_0,\theta_1,\theta_2)$ corresponds a $r\in[r_0,1]$ and a 
	$\theta\in[\theta_1,\theta_2]$ with $\gamma_\theta(r)=z$, we get for given 
	$z\in A(r_0,\theta_1,\theta_2)$
	\[
		\ln(|z|^{\delta}) \le \ln\left| \frac{f(z)}{z} \right| \le \ln(|z|^{-\delta}).
	\]
	Finally, we get the asserted equation by applying the exponential function.
\end{proof}
According to this lemma we define $A_\epsilon(\zeta):= A(1-\epsilon, \theta-\pi\epsilon, 
\theta+ \pi\epsilon)$, where $\zeta= e^{\iu \theta}$ and $0<\epsilon<1$.
\begin{lemma} \label{Lem:SetA}
	Let be $(\tau_n)_{n\in\N}$ a sequence (in $[0,T]$) converging to $\tau_0$ and 
	$(\lt_n)_{n\in\N}$ and $(\gt_n)_{n\in\N}$ converging to $t_0$, where 
	$0\le\lt_n\le \gt_n\le T$ holds for all $n\in\N$. Then we can find for every $\epsilon >0$ 
	a $n_0\in\N$ such that
	\[
		f_{k; \lt_n, \tau_n}\Big( \gamma_k\big([\lt_n, \gt_n] \big) \Big) \subset 
		A_\epsilon \big(\xi_k(t_0,\tau_0) \big)
	\]
	holds for all $n\ge n_0$.
\end{lemma}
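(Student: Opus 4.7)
The plan is to reduce the sector containment to a ball containment, and then to obtain the latter by adapting the argument from the proof of Proposition~\ref{Pro:ConMovFXi} to the situation in which both $\lt_n$ and $\gt_n$ converge to $t_0$.

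For the reduction, write $\xi_k(t_0,\tau_0)=e^{\iu\theta}$ and $z=re^{\iu\phi}\in\overline{\D}$. The identity $|z-e^{\iu\theta}|^2=(1-r)^2+4r\sin^2((\phi-\theta)/2)$ shows that $|z-\xi_k(t_0,\tau_0)|\le\eta$ forces $r\ge 1-\eta$ and $|\phi-\theta|\le C\eta$ for $\eta$ small, with $C$ a universal constant. Since $S_{k;\lt_n,\gt_n,\tau_n}\subset\overline{\D}$, it follows that $B_\eta(\xi_k(t_0,\tau_0))\cap\overline{\D}\subset A_\epsilon(\xi_k(t_0,\tau_0))$ once $\eta$ is chosen small enough relative to $\epsilon$. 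Therefore it is enough to verify that $S_{k;\lt_n,\gt_n,\tau_n}\subset B_\eta(\xi_k(t_0,\tau_0))$ for every $\eta>0$ and all sufficiently large $n$.

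To obtain the ball containment, set $\Psi_n:=f_{k;\lt_n,\tau_n}\circ f_{k;\gt_n,\tau_n}^{-1}$ on $D_k(\gt_n,\tau_n)$. Applying Proposition~\ref{Pro:ComConf} to both factors (each converges to $f_{k;t_0,\tau_0}$) gives $\Psi_n\to\operatorname{id}$ locally uniformly on the kernel $D_k(t_0,\tau_0)$, and Schwarz reflection extends the $\Psi_n$ across $\partial\D\setminus s_{k;\lt_n,\gt_n,\tau_n}$, with the uniform convergence extending to compact subsets of the reflected domain. A preliminary step---adapting the corresponding step of Proposition~\ref{Pro:ConMovFXi} to variable $\gt$---shows that $s_{k;\lt_n,\gt_n,\tau_n}\subset B_{\eta/4}(\xi_k(t_0,\tau_0))$ for $n$ large: its ``midpoint'' $\xi_k(\gt_n,\tau_n)$ converges to $\xi_k(t_0,\tau_0)$ by continuity of $\xi_k$, while its diameter tends to $0$ via a Carath\'eodory prime-end argument for the convergent family $f_{k;\gt_n,\tau_n}\to f_{k;t_0,\tau_0}$ at the accessible boundary prime end associated with $\gamma_k(t_0)$. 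Given this, $\partial B_{\eta/2}(\xi_k(t_0,\tau_0))$ lies in the Schwarz-extended domain of $\Psi_n$, and uniform convergence there yields $\Psi_n(\partial B_{\eta/2})\subset B_{3\eta/4}(\xi_k(t_0,\tau_0))$ for large $n$. The concluding step of Proposition~\ref{Pro:ConMovFXi} now produces $\Psi_n(B_{\eta/2}\setminus s_{k;\lt_n,\gt_n,\tau_n})\subset\overline{B_{3\eta/4}(\xi_k(t_0,\tau_0))}$, and passing to boundary values of $\Psi_n$ on the two sides of the slit gives $S_{k;\lt_n,\gt_n,\tau_n}=\Psi_n(s_{k;\lt_n,\gt_n,\tau_n})\subset\overline{B_{3\eta/4}}\subset B_\eta(\xi_k(t_0,\tau_0))$, as required.

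The principal obstacle is the preliminary step for $s_{k;\lt_n,\gt_n,\tau_n}$: because Proposition~\ref{Pro:ConMovFXi} is stated only with one of $\lt,\gt$ fixed, the present case of both endpoints moving simultaneously requires a diagonal/subsequence argument combining the continuity of $\xi_k$ with the extended Carath\'eodory kernel theorem, so as to control the images under the varying maps $f_{k;\gt_n,\tau_n}$ of the boundary points $\gamma_k(s_n)$ with $s_n\in[\lt_n,\gt_n]\to t_0$.
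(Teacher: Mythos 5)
Your proposal is correct and follows essentially the same route as the paper, whose one-line proof simply observes that $\operatorname{diam} S_{k;\lt_n,\gt_n,\tau_n}\to 0$ while $\xi_k(\lt_n,\tau_n)\in S_{k;\lt_n,\gt_n,\tau_n}$ converges to $\xi_k(t_0,\tau_0)$, so the set eventually lies in any small ball about $\xi_k(t_0,\tau_0)$ and hence in $A_\epsilon(\xi_k(t_0,\tau_0))$. You supply considerably more detail than the paper does --- in particular the explicit ball-to-sector reduction and the adaptation of Proposition \ref{Pro:ConMovFXi} to the case where both endpoints move, a point the paper's appeal to that proposition glosses over --- but the underlying idea is identical.
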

\begin{proof}
	This lemma follows immediately from Proposition \ref{Pro:ConMovFXi}, as the diameter of the set 
	$f_{k; \lt_n, \tau_n}\big( \gamma_k([\lt_n, \gt_n] )\big)$ tends to zero 
	and $\xi_k(t_n,\tau_n)$ tends to $\xi_k(t_0,\tau_0)$  
	if $n\rightarrow \infty$ and $\xi_k(\lt_n,\tau_n)\in f_{k; \lt_n, \tau_n}\big( 
	\gamma_k([\lt_n, \gt_n] ) \big)$ for all $n\in\N$.
\end{proof}
\begin{proof}[Proof of Theorem \ref{The:LoKCSDMul} in the case $t\nearrow t_0$]
	Analog to the other case we start with
	\[
		\log \frac{g_{t_0,t(z)}}{z} = 
		\frac{1}{2\pi} \sum_{k=1}^m \Big(\Re\big(\Phi(\zeta_{t_0,t}^{k,1},z;t_0)\big)+
		\iu\Im\big(\Phi(\zeta_{t_0,t}^{k,2},z;t_0)\big)\Big) \int_{\beta_k(t_0,t)} 
		\ln|g_{t_0,t}(\zeta)| \, |\intd\zeta|
	\]
	where $\zeta_{t_0,t}^{k,j}\in\beta_{k}(t_0,t)$ with $j=1,2$. We denote the integral
	by $c_k$, so we get
	\[
		c_k(t_0,t) = \int_{\beta_k(t_0,t)} \ln|g_{t_0,t}(\zeta)|\,|\intd \zeta| =
		\int_{\beta_k(t_0,t)} \ln|h^{-1}_{k;t_0,t}(g_{k;t_0,t}(\zeta))|\, |\intd \zeta|.
	\]
	with the abbreviations $g_{k;t_0,t}:=f_{k;t,t_0}\circ g_{t_0}^{-1}$ and 
	$h_{k;t_0,t}:=f_{k;t,t_0}\circ g_{t}^{-1}$.
	\begin{center}
		\includegraphics[scale=\scalefactor]{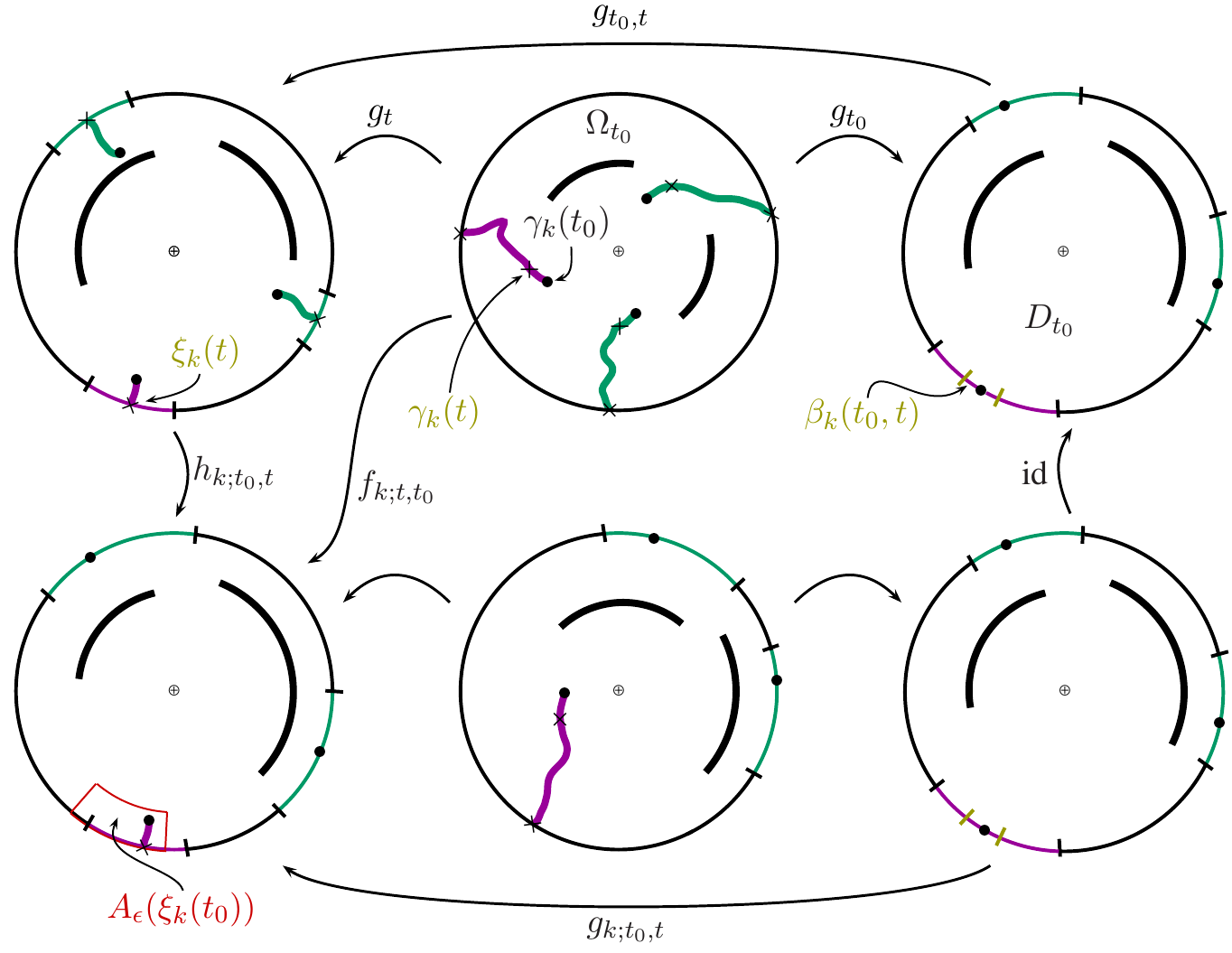}
	\end{center}
	We will now discuss the map $h^{-1}_{k;t_0,t}$. 
	First of all, we notice that (by Lemma \ref{Lem:SetA}) for each $\epsilon>0$ there is a
	$\rho_1(\epsilon)>0$ such that the images of the set $\gamma\big([t,t_0] \big)$ under 
	the functions $f_{k;t,t_0}$ lie in $A_\epsilon\big(\xi_k(t_0)\big)$ for all 
	$t\in[t_0-\rho_1,t_0]$.
	If $\epsilon>0$ is chosen small enough, the functions $h^{-1}_{k;t_0,t}$ are 
	analytic in $A_\epsilon\big(\xi_k(t_0)\big)$ by reflection.
	
	Consequently, the functions $\log\big(h^{-1}_{k;t_0,t}(z)/z\big)$ are analytic in 
	$A_\epsilon\big(\xi_k(t_0)\big)$. Furthermore, this sequence of functions tends (by Proposition 
	\ref{Pro:ComConf}) locally uniformly on $D_k(t,t_0)$ for $t\nearrow t_0$ to the 
	function identically zero as $t$ tends to $t_0$, so the first derivative tends 
	(uniformly on compact sets) to zero too. 
	Thus we find for every $\delta>0$ a $\rho>0$ with
	\[
		\left|\frac{\intd}{\intd z} \log\left(\frac{h^{-1}_{k;t_0,t}(z)}{z} \right) \right| < 
		\delta
	\]
	for all $z\in A_\epsilon\big(\xi_k(t_0)\big)$ and $t\in [t_0-\rho,t_0]$, where $\rho<\rho_1$.
	Hence we can use Lemma \ref{Lem:EstSetA} to get
	\[
		|z|^{1+\delta} \le |h^{-1}_{k,t_0,t}(z)| \le |z|^{1-\delta}
	\]
	for all $z\in A_\epsilon\big(\xi_k(t_0)\big)$ and $t\in [t_0-\rho,t_0]$.
	By using this estimate we get
	\begin{multline*}
		{(1+\delta_{t_0,t})} \int_{\beta_k(t_0,t)} \ln{|g_{k;t_0,t}(\zeta)|} |\intd\zeta|\\
		\le \int_{\beta_k(t_0,t)} \ln{|h^{-1}_{k;t_0,t}(g_{k;t_0,t}(\zeta)|} |\intd\zeta|
		\le {(1-\delta_{t_0,t})} \int_{\beta_k(t_0,t)} \ln{|g_{k;t_0,t}(\zeta)|} 
		|\intd\zeta|,
	\end{multline*}
	where $\delta_{t_0,t}$ is chosen in such a way, that the previous estimation holds and 
	$\delta_{t_0,t}\rightarrow 0$ as $t\nearrow t_0$. 
	Furthermore, we find 
	\[
		\int_{\beta_k(t_0,t)} \ln|g_{k;t_0,t}(\zeta)| |\intd\zeta|=
		2\pi \big(\lmr(g_{k;t_0,t})-\lmr(g_{k;t_0,t_0}) \big)
 	\]
	in the same way as in equation (\ref{Equ:AppCauchy}).
	Thus we get 
	\[
	 	\lim_{t\nearrow t_0} \frac{c_k(t_0,t)}{t-t_0} = 
		2\pi\lim_{t\nearrow t_0} \frac{\lmr(f_{k;t,t_0})-\lmr(f_{k;t_0,t_0})}{t-t_0}
		=2\pi \lambda_k(t_0).
	\]
	by using $\lmr(f_{k;t,t_0})=\lmr(g_{k;t,t_0})+\lmr(g_{t_0})$ combined with the
	previous inequality.
	Summarizing we find
	\begin{align*}
		\lim_{t\nearrow t_0} &\frac{\log \frac{g_t(w)}{g_{t_0}(w)}}{t-t_0} \\ 
		&=\frac{1}{2\pi}
		\sum_{k=1}^m \lim_{t\nearrow t_0}\Big[ \Re\big(\Phi(\zeta_{t_0,t}^{k,1},g_{t_0}(w);
		t_0)\big) + \iu\Im\big(\Phi(\zeta_{t_0,t}^{k,2},g_{t_0}(w);t_0)\big)\Big]\cdot 
		\lim_{t\nearrow t_0}\frac{c_k(t,t_0)}{t-t_0}\\
		&=\sum_{k=1}^m \lambda_k(t_0) \cdot \Phi(\xi_k(t_0), g_{t_0}(w);t_0),
	\end{align*}
	so the differential equation is proved.
\end{proof}
		\section{Proof of Theorem \ref{The:LamAlmEve}}\label{Sec:Proof2}
First of all we will introduce the abbreviations
\[
	T_{k;t,\ltau,\gtau} := f_{k;t,\gtau} \circ f_{k;t,\ltau}^{-1},\quad
	u_{k;\lt,\gt,\tau} := f_{k;\lt,\tau} \circ f_{k;\gt,\tau}^{-1}.
\]
for all $0\le \ltau \le \gtau<T$ and $0\le t \le T$.
Before we can prove Theorem \ref{The:LamAlmEve} we need some lemmas.
\begin{lemma} \label{Lem:FunOneAbs}
	For each $\epsilon>0$ there exists a $\delta>0$ so that for all $0\le\lt,\gt,\ltau,\gtau\le T$
	with $0\le\gt-\lt, \gtau-\ltau<\delta$ and all $z\in s_{k;\lt,\gt,\ltau}$ holds
	\[
		\big| T_{k;\gt,\ltau,\gtau}'(z) -1 \big|<\epsilon.
	\]
\end{lemma}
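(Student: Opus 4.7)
The plan is to argue by contradiction, combining the compactness of $[0,T]$ with the locally uniform convergence results established earlier. Suppose the lemma fails. Then there exist $\epsilon_0 > 0$, sequences $\lt_n, \gt_n, \ltau_n, \gtau_n \in [0,T]$ with $\gt_n - \lt_n \to 0$ and $\gtau_n - \ltau_n \to 0$, and points $z_n \in s_{k;\lt_n,\gt_n,\ltau_n}$ satisfying $|T_{k;\gt_n,\ltau_n,\gtau_n}'(z_n) - 1| \ge \epsilon_0$. After passing to subsequences, I may assume $\lt_n, \gt_n \to t_0$ and $\ltau_n, \gtau_n \to \tau_0$ for some $t_0, \tau_0 \in [0,T]$, and by Proposition \ref{Pro:ConMovFXi} the arcs $s_{k;\lt_n,\gt_n,\ltau_n}$ shrink to the single point $\zeta_0 := \xi_k(t_0,\tau_0)$, so in particular $z_n \to \zeta_0$.

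The key geometric point is that $T_{k;\gt_n,\ltau_n,\gtau_n} = f_{k;\gt_n,\gtau_n} \circ f_{k;\gt_n,\ltau_n}^{-1}$ sends the arc $s_{k;\lt_n,\gt_n,\ltau_n} \subset \partial\D$ into $\partial\D$ again: the prime ends of $\gamma_k([\lt_n,\gt_n])$ lie on the outer boundary component of both $\Omega_k(\gt_n,\ltau_n)$ and $\Omega_k(\gt_n,\gtau_n)$, hence are mapped into $\partial\D$ by both Riemann maps. I would next argue that there is a fixed disk $B = B_\rho(\zeta_0)$ so that, for all sufficiently large $n$, the only boundary piece of $D_k(\gt_n,\ltau_n)$ and of $D_k(\gt_n,\gtau_n)$ intersecting $B$ is a sub-arc of $\partial\D$: the concentric slits stay uniformly bounded away from $\partial\D$ by kernel convergence of the image domains (as used in the proof of Proposition \ref{Pro:ComConf}), and the $\partial\D$-arcs coming from the other curves $\gamma_j$, $j \ne k$, stay bounded away from $\zeta_0$ by applying Proposition \ref{Pro:ConMovFXi} separately to each $j$.

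Granted this, the Schwarz reflection principle extends every $T_{k;\gt_n,\ltau_n,\gtau_n}$ analytically to all of $B$. By Proposition \ref{Pro:ComConf}, the sequence tends locally uniformly to the identity on the kernel, and reflecting this convergence across $\partial\D \cap B$ yields uniform convergence to the identity on $\overline{B_{\rho/2}(\zeta_0)}$. The Weierstrass convergence theorem then forces the derivatives to converge uniformly to $1$ on this closed disk. Since $z_n \to \zeta_0$, this contradicts the assumption $|T_{k;\gt_n,\ltau_n,\gtau_n}'(z_n) - 1| \ge \epsilon_0$.

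The main obstacle will be to set up the fixed disk $B$ uniformly in $n$, that is, to rule out interference from the concentric circular slits or from the $\partial\D$-arcs generated by the other curves $\gamma_j$ as the indices vary. This requires combining the continuity of all driving terms $\xi_j$ from Proposition \ref{Pro:ConMovFXi} with the kernel convergence of the image domains, exactly as in the extension-by-reflection step already used in Proposition \ref{Pro:ConMovFXi}.
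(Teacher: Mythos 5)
Your argument is correct and is essentially the paper's own proof: a contradiction via convergent subsequences, localization of $z_n$ near $\xi_k(t_0,\tau_0)$ using Proposition \ref{Pro:ConMovFXi}, analytic extension of $T_{k;\gt_n,\ltau_n,\gtau_n}$ to a fixed disk by Schwarz reflection, and uniform convergence of the derivatives to $1$ via Proposition \ref{Pro:ComConf} and Weierstrass. You simply spell out in more detail the step the paper compresses into ``if $\rho$ is small enough the functions are analytic in $B_\rho(z_0)$ by reflection.''
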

\begin{center}
	\includegraphics[scale=\scalefactor]{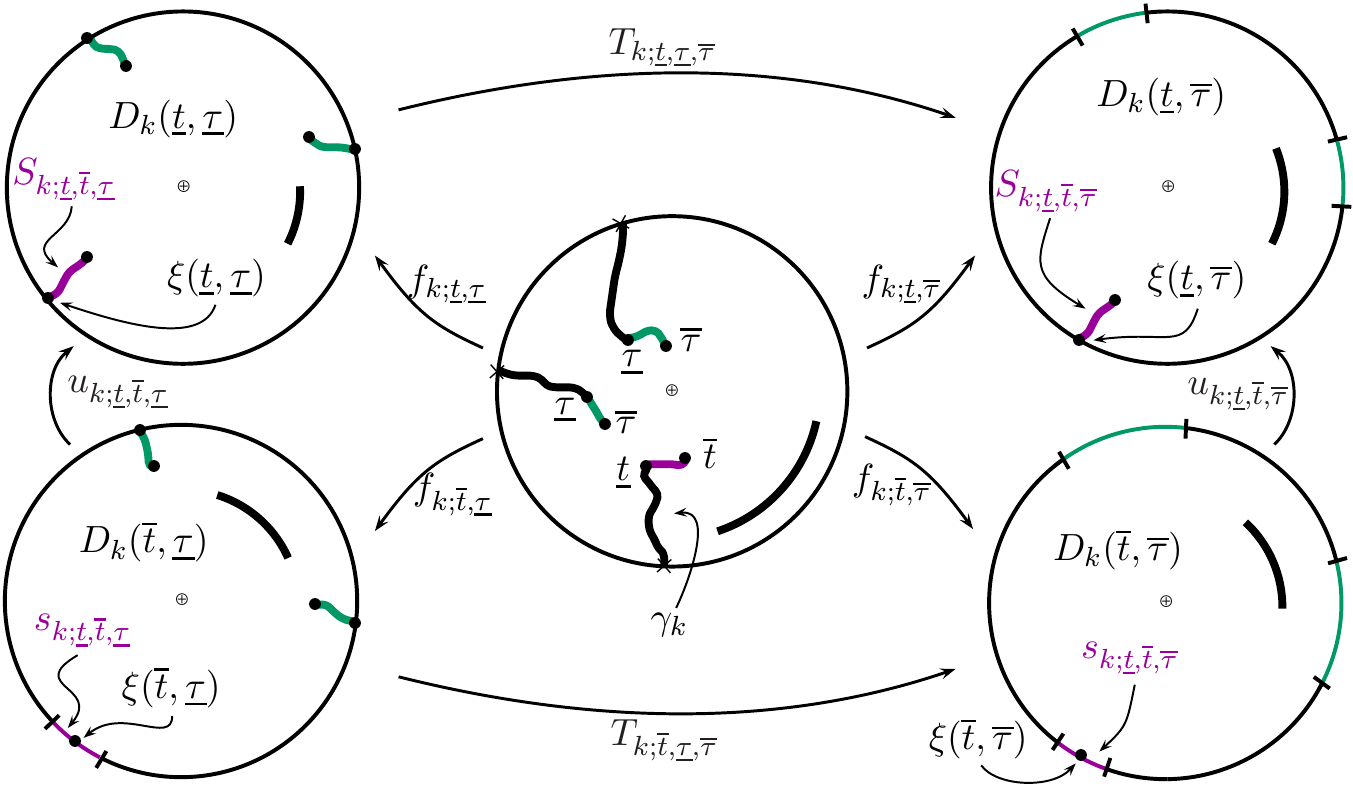} 
\end{center}
\begin{proof}
	Since there is no risk of confusion, we omit the index $k$.
	We assume the opposite:
	Assume there exists an $\epsilon > 0$ so that for all $\delta>0$ exists 
	$0\le\lt,\gt,\ltau,\gtau\le T$
	with $0\le\gt-\lt,\gtau-\ltau<\delta$ and $z\in s_{\lt,\gt,\ltau}$ 
	so that 
	\[
		\big| T_{\gt,\ltau,\gtau}'(z) -1 \big|\ge\epsilon
	\]
	holds. Let ${(\delta_n)}_{n\in\N}$ be a sequence that converges to zero, and denote by
	$(\lt_n)_{n\in\N}$, $(\gt_n)_{n\in\N}$, $(\ltau_n)_{n\in\N}$, $(\gtau_n)_{n\in\N}$ and
	$z_n\in s_{\lt_n,\gt_n,\ltau_n}$ sequences that fulfil the 
	condition described before. Without restricting 
	generality we can assume (by boundedness) that all these sequences are convergent, i.e.
	\[
		\lt_n \rightarrow t_0 \leftarrow \gt_n,\quad \ltau_n\rightarrow \tau_0 \leftarrow \gtau_n,
		\quad z_n\rightarrow z_0=\xi(t_0,\tau_0),
	\]
	where the last equation follows from Proposition \ref{Pro:ConMovFXi}.
	Thus we find for every $\rho>0$ a $n_0\in\N$ with $z_n\in B_\rho(z_0)$ for all $n\ge n_0$.
	If $\rho$ is small enough we find a $n_0^*\ge n_0$ so that the functions 
	$T_{\gt_n,\ltau_n,\gtau_n}$ are analytic in $B_\rho(z_0)$ by reflection.
	Consequently by using Proposition \ref{Pro:ComConf} the sequence of functions 
	$T_{\gt_n,\ltau_n,\gtau_n}$ converges
	uniformly on $B_\rho(z_0)$ to the identity mapping. Thus $T_{\gt_n,\ltau_n,\gtau_n}'$ 
	converges uniformly on $B_\rho(z_0)$ to 1. This is 
	a contradiction, so the proof is complete.
\end{proof}
\begin{lemma} \label{Lem:LogFunAbs}
	For all $\epsilon>0$ exists a $\delta>0$ so that for all 
	$0\le\lt,\gt,\ltau,\gtau\le T$ with $0\le\gt-\lt, \gtau-\ltau<\delta$ 
	exists a $\rho>0$ with $S_{k;\lt,\gt,\gtau} \subset 
	A_\rho(\xi_k(\lt,\gtau))$ so that for all $z \in A_\rho(\xi_k(\lt,\gtau))$ holds
	\[
		\left| \frac{\intd}{\intd z} \log\left( \frac{T_{k;\lt,\ltau,\gtau}^{-1}(z)}
		{z} \right) \right|<\epsilon.
	\]
\end{lemma}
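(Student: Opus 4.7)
The plan is to argue by contradiction and reduce to a local convergence statement that can be controlled using Proposition~\ref{Pro:ComConf}, Proposition~\ref{Pro:ConMovFXi}, Lemma~\ref{Lem:SetA} and the Schwarz reflection principle, much as in the proof of Lemma~\ref{Lem:FunOneAbs}. Omitting the index $k$, I first note that $T_{\lt,\ltau,\gtau}^{-1} = f_{\lt,\ltau} \circ f_{\lt,\gtau}^{-1}$ is defined on $D(\lt,\gtau)$ and maps $\partial\D$ to $\partial\D$ near $\xi(\lt,\gtau)$, so it extends analytically across the corresponding boundary arc by reflection, provided the reflection domain is kept away from the other (inner) slits.

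Suppose the conclusion fails. Then there exist $\epsilon_0>0$ together with sequences $\lt_n,\gt_n,\ltau_n,\gtau_n\in[0,T]$ with $\gt_n-\lt_n,\gtau_n-\ltau_n\to 0$ such that for no $\rho>0$ for which $S_{\lt_n,\gt_n,\gtau_n}\subset A_\rho(\xi(\lt_n,\gtau_n))$ the estimate holds on the whole wedge; in particular there is $z_n\in A_\rho(\xi(\lt_n,\gtau_n))$ with $\bigl|\frac{\intd}{\intd z}\log(T_{\lt_n,\ltau_n,\gtau_n}^{-1}(z)/z)(z_n)\bigr|\ge \epsilon_0$. After extracting subsequences I may assume $\lt_n,\gt_n\to t_0$ and $\ltau_n,\gtau_n\to \tau_0$.

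The key step is to fix a $\rho^*>0$, depending only on $(t_0,\tau_0)$, so small that the closed wedge $\overline{A_{\rho^*}(\xi(t_0,\tau_0))}$ meets $\partial D(t_0,\tau_0)$ only inside $\partial\D$. Kernel convergence $D(\lt_n,\gtau_n)\to D(t_0,\tau_0)$ together with Proposition~\ref{Pro:ConMovFXi} then ensures that, for large $n$, the shifted wedge $A_{\rho^*/2}(\xi(\lt_n,\gtau_n))$ lies inside $A_{\rho^*}(\xi(t_0,\tau_0))$, meets $\partial D(\lt_n,\gtau_n)$ only inside $\partial\D$, and by Lemma~\ref{Lem:SetA} contains $S_{\lt_n,\gt_n,\gtau_n}$. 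Schwarz reflection therefore extends $T_{\lt_n,\ltau_n,\gtau_n}^{-1}$ to an analytic function on $A_{\rho^*}(\xi(t_0,\tau_0))$ for all large $n$. By Proposition~\ref{Pro:ComConf}, $f_{\lt_n,\ltau_n}$ and $f_{\lt_n,\gtau_n}$ both converge locally uniformly to $f_{t_0,\tau_0}$, so $T_{\lt_n,\ltau_n,\gtau_n}^{-1}\to \mathrm{id}$ locally uniformly on $D(t_0,\tau_0)$; the convergence survives the reflection. Hence $\log(T_{\lt_n,\ltau_n,\gtau_n}^{-1}(z)/z)\to 0$ uniformly on compact subsets of $A_{\rho^*/2}(\xi(t_0,\tau_0))$, and by Weierstrass's theorem so does its derivative. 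Taking $\rho=\rho^*/2$ for large $n$ therefore contradicts the lower bound at $z_n$. A standard finite-cover argument over the compact space $[0,T]^2$ of base points $(t_0,\tau_0)$ then upgrades this local statement to the uniform $\delta$ claimed.

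The main obstacle is the uniform choice of the wedge: one needs $\rho^*$ to serve all sufficiently close parameter quadruples simultaneously, which hinges on two uniform continuity inputs, namely the motion of the inner slits of $D(\lt,\gtau)$ (controlled through the kernel convergence used in Proposition~\ref{Pro:ComConf}) and the continuity of $(\lt,\gtau)\mapsto\xi(\lt,\gtau)$ from Proposition~\ref{Pro:ConMovFXi}. Once these are in place, Lemma~\ref{Lem:SetA} produces the required containment $S_{\lt_n,\gt_n,\gtau_n}\subset A_{\rho^*/2}(\xi(\lt_n,\gtau_n))$ and the rest is a Weierstrass-type estimate on the derivative.
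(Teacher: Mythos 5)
Your argument is correct and follows essentially the same route as the paper's proof: negate the statement, extract convergent parameter subsequences, use Proposition~\ref{Pro:ConMovFXi}/Lemma~\ref{Lem:SetA} to trap $S_{k;\lt_n,\gt_n,\gtau_n}$ in a fixed wedge around $\xi_k(t_0,\tau_0)$, extend $T_{k;\lt_n,\ltau_n,\gtau_n}^{-1}$ by Schwarz reflection, and let Proposition~\ref{Pro:ComConf} plus Weierstrass force the logarithmic derivative to zero uniformly, contradicting the assumed lower bound. The only cosmetic differences are your fixed $\rho^*$ versus the paper's shrinking sequence $\rho_n$ inside a fixed $A_\psi$, and your closing finite-cover remark, which is redundant since the contradiction already yields the uniform $\delta$.
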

If we combine this lemma with Lemma \ref{Lem:EstSetA} we can easily derive the following lemma.
\begin{lemma} \label{Lem:FunTwoAbs}
	For all $\epsilon>0$ exists a $\delta>0$ so that for all 
	$0\le\lt,\gt,\ltau,\gtau\le T$ with $0\le\gt-\lt, \ltau-\gtau<\delta$ 
	and $z\in S_{k;\lt,\gt,\gtau}$ holds
	\[
		{|z|}^{1+\epsilon} \le |T_{k;\lt,\ltau,\gtau}^{-1}(z)| \le
		{|z|}^{1-\epsilon}
	\]
\end{lemma}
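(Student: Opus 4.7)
The plan is to apply Lemma \ref{Lem:EstSetA} directly to the function $f := T_{k;\lt,\ltau,\gtau}^{-1}$ on the sector $A_\rho(\xi_k(\lt,\gtau))$ supplied by Lemma \ref{Lem:LogFunAbs}. The strategy is purely a bookkeeping job: Lemma \ref{Lem:LogFunAbs} gives exactly the derivative bound that is the third hypothesis of Lemma \ref{Lem:EstSetA}, while the other two hypotheses (analyticity of $\log(f(z)/z)$ on the sector and $|f(z)|=1$ on its boundary arc) come from the Schwarz reflection principle.

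First, given $\epsilon>0$ I would invoke Lemma \ref{Lem:LogFunAbs} with that same $\epsilon$ to obtain a $\delta>0$ such that whenever $0\le \gt-\lt, \gtau-\ltau<\delta$ there exists a $\rho>0$ with $S_{k;\lt,\gt,\gtau}\subset A_\rho(\xi_k(\lt,\gtau))$ and
\[
    \left|\frac{\intd}{\intd z}\log\!\left(\frac{T_{k;\lt,\ltau,\gtau}^{-1}(z)}{z}\right)\right|<\epsilon
    \quad\text{for all } z\in A_\rho(\xi_k(\lt,\gtau)).
\]
This is the role of the hypothesis $\delta$ in Lemma \ref{Lem:EstSetA}; after shrinking $\rho$ if necessary I may also assume $\rho<1$ so that $A_\rho(\xi_k(\lt,\gtau))$ stays in an annulus bounded away from $0$.

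Next I would verify the remaining hypotheses of Lemma \ref{Lem:EstSetA}. Writing $T_{k;\lt,\ltau,\gtau}^{-1}=f_{k;\lt,\ltau}\circ f_{k;\lt,\gtau}^{-1}$, both factors are normalized Riemann maps sending the outer boundary of their domain to $\partial \D$, so by the Schwarz reflection principle $T_{k;\lt,\ltau,\gtau}^{-1}$ extends analytically across the portion of $\partial \D$ contained in $A_\rho(\xi_k(\lt,\gtau))$ provided $\rho$ is small enough to avoid the image of the slit. In particular $|T_{k;\lt,\ltau,\gtau}^{-1}(z)|=1$ on $\partial \D\cap A_\rho(\xi_k(\lt,\gtau))$, and $T_{k;\lt,\ltau,\gtau}^{-1}$ is zero-free on the sector, so $\log(T_{k;\lt,\ltau,\gtau}^{-1}(z)/z)$ is a well defined analytic branch there.

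With all hypotheses of Lemma \ref{Lem:EstSetA} verified (with $\delta$ of that lemma equal to our $\epsilon$), it yields
\[
    |z|^{1+\epsilon}\le \big|T_{k;\lt,\ltau,\gtau}^{-1}(z)\big|\le |z|^{1-\epsilon}
\]
for every $z\in A_\rho(\xi_k(\lt,\gtau))$, and restricting to the subset $S_{k;\lt,\gt,\gtau}$ gives the claim. The only mild obstacle is making sure that $\rho$ can be chosen small enough that the analytic continuation across $\partial \D$ really exists and avoids the concentric slits of $D_k(\lt,\gtau)$; this is guaranteed by Proposition \ref{Pro:ConMovFXi} (the arc $s_{k;\lt,\gt,\gtau}$ collapses to $\xi_k(\lt,\gtau)$ as the widths shrink), which allows the $\delta$ of Lemma \ref{Lem:LogFunAbs} to be shrunk further if needed without affecting the final bound.
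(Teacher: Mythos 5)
Your proposal is correct and follows essentially the same route as the paper, which derives this lemma precisely by combining Lemma \ref{Lem:LogFunAbs} (supplying the derivative bound on the sector $A_\rho(\xi_k(\lt,\gtau))$ containing $S_{k;\lt,\gt,\gtau}$) with Lemma \ref{Lem:EstSetA}. The paper leaves the verification of the remaining hypotheses of Lemma \ref{Lem:EstSetA} implicit, whereas you spell out the Schwarz reflection argument; this is a faithful filling-in of the same proof.
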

\begin{proof}[Proof of Lemma \ref{Lem:LogFunAbs}]
	Since there is no risk of confusion, we omit the index $k$.
	We assume the opposite:
	Assume there exists an $\epsilon > 0$ so that for all $\delta>0$ exists 
	$0\le\lt,\gt,\ltau,\gtau\le T$
	with $0\le\gt-\lt, \gtau-\ltau<\delta$ so that for all 
	$\rho>0$ with $S_{\lt,\gt,\gtau} \subset A_\rho(\xi(\lt,\gtau))$
	exists a $z \in A_\rho(\xi(\lt,\gtau))$ so that 
	\[
		\left| \frac{\intd}{\intd z} \log\left( \frac{T_{\lt,\ltau,\gtau}^{-1}(z)}
		{z} \right) \right| \ge \epsilon
	\]
	holds. Let ${(\delta_n)}_{n\in\N}$ be a sequence that converges to zero, and denote by
	$(\lt_n)_{n\in\N}$, $(\gt_n)_{n\in\N}$, $(\ltau_n)_{n\in\N}$ and $(\gtau_n)_{n\in\N}$ 
	sequences as described before. Without restricting
	generality we can assume that all these sequences are convergent, i.e.
	\[
		\lt_n \rightarrow t_0 \leftarrow \gt_n,\quad \ltau_n\rightarrow \tau_0 \leftarrow \gtau_n.
	\]
	As $n$ tends to infinity, $\xi(\lt_n,\gtau_n)$ tends to $\xi(t_0,\tau_0)$ and the 
	diameter of the set $S_{\lt_n,\gt_n,\gtau_n}$ tends to zero. Thus
	we can find a sequence $(\rho_n)_{n\in\N}$ that fulfils the condition 
	$S_{\lt_n,\gt_n,\gtau_n} \subset A_{\rho_n}(\xi(\lt_n,\gtau_n))$
	and tends to zero. Consequently we find for every given $\psi>0$ a $n_0\in\N$ with
	$A_{\rho_n}(\xi(\lt_n,\gtau_n)) \subset A_\psi(\xi(t_0,\tau_0))$ for every $n\ge n_0$.
	If $\psi$ is small enough, we find a $n_0^*\ge n_0$ so that the functions 
	$T_{\lt_n,\ltau_n,\gtau_n}^{-1}$ are analytic in $A_\psi(\xi(t_0,\tau_0))$
	by reflection for all $n\ge n_0^*$. Consequently the sequence of functions 
	$T_{\lt_n,\ltau_n,\gtau_n}^{-1}$ converges uniformly on $A_\psi(\xi(t_0,\tau_0))$ 
	to the identity mapping. Thus the sequence of functions
	\[
		\left| \frac{\intd}{\intd z} \log\left( \frac{(T_{\lt_n,\ltau_n,\gtau_n}^{-1}(z)}{z}
		\right) \right| 
	\]
	converges there uniformly to zero. This is a contradiction, so the proof is complete.
\end{proof}
\begin{proposition} \label{Pro:DifQuoAbs}
	For all $\epsilon>0$ exists a $\delta>0$ so that for all 
	$0\le\lt,\gt,\ltau,\gtau\le T$ with $0<\gt-\lt<\delta$ and $0\le \gtau-\ltau<\delta$
	holds
	\[
		1-\epsilon <\frac{\lmr(f_{k;\lt,\ltau})-\lmr(f_{k;\gt,\ltau})}{\lmr(f_{k;\lt,\gtau})-
		\lmr(f_{k;\gt,\gtau})} < 1+\epsilon.
	\]
\end{proposition}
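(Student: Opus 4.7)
My plan is to reduce the quotient to a ratio of logarithmic mapping radii of the auxiliary maps $u_{k;\lt,\gt,\tau}$ and to compare the two resulting integrals via the Schwarz-reflection-controlled maps $T_{k;\cdot,\ltau,\gtau}$. Concretely, the identity $f_{k;\lt,\tau}=u_{k;\lt,\gt,\tau}\circ f_{k;\gt,\tau}$ together with $f_{k;\gt,\tau}(0)=0$ gives $\lmr(f_{k;\lt,\tau})-\lmr(f_{k;\gt,\tau})=\lmr(u_{k;\lt,\gt,\tau})$, so the quotient in the proposition equals $\lmr(u_{k;\lt,\gt,\ltau})/\lmr(u_{k;\lt,\gt,\gtau})$ (both quantities are negative by Proposition \ref{Pro:Monfunf}). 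I would then reproduce the Cauchy-type computation that yielded equation $(\star)$ in Section \ref{Sec:Proof1}, but applied to $u_{k;\lt,\gt,\tau}$: the function $\log(u_{k;\lt,\gt,\tau}(\zeta)/\zeta)$ is analytic and nonzero at $0$; the concentric slits of $\partial D_k(\gt,\tau)$ contribute nothing (because $u_{k;\lt,\gt,\tau}$ maps concentric slits to concentric slits, so both $|u|$ and $|\zeta|$ are constant and the two sides cancel under $\intd\arg\zeta$); and on $\partial\D\setminus s_{k;\lt,\gt,\tau}$ one has $|u|=|\zeta|=1$. Only the $s_{k;\lt,\gt,\tau}$-piece survives, giving
\[
\lmr(u_{k;\lt,\gt,\tau})=\frac{1}{2\pi}\int_{s_{k;\lt,\gt,\tau}}\ln|u_{k;\lt,\gt,\tau}(\zeta)|\,|\intd\zeta|.
\]

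The bridge between the two values of $\tau$ is the intertwining identity
\[
u_{k;\lt,\gt,\gtau}=T_{k;\lt,\ltau,\gtau}\circ u_{k;\lt,\gt,\ltau}\circ T_{k;\gt,\ltau,\gtau}^{-1},
\]
which follows directly from the definitions. Substituting $\zeta=T_{k;\gt,\ltau,\gtau}(w)$ in the representation of $\lmr(u_{k;\lt,\gt,\gtau})$ (the map $T_{k;\gt,\ltau,\gtau}$ extends analytically across $s_{k;\lt,\gt,\ltau}\subset\partial\D$ by Schwarz reflection for $\delta$ small, and sends $s_{k;\lt,\gt,\ltau}$ onto $s_{k;\lt,\gt,\gtau}$) turns it into an integral over $s_{k;\lt,\gt,\ltau}$ whose integrand differs from $\ln|u_{k;\lt,\gt,\ltau}(w)|$ by two factors: $|T'_{k;\gt,\ltau,\gtau}(w)|$ and the ratio $\ln|T_{k;\lt,\ltau,\gtau}(u_{k;\lt,\gt,\ltau}(w))|/\ln|u_{k;\lt,\gt,\ltau}(w)|$. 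Lemma \ref{Lem:FunOneAbs} gives $|T'_{k;\gt,\ltau,\gtau}(w)|\in(1-\epsilon,1+\epsilon)$, and because $T_{k;\lt,\ltau,\gtau}$ sends $S_{k;\lt,\gt,\ltau}$ onto $S_{k;\lt,\gt,\gtau}$, Lemma \ref{Lem:FunTwoAbs} forces the second factor into $(1/(1+\epsilon),1/(1-\epsilon))$ (after taking logarithms and tracking signs carefully, since both logs are negative). Multiplying these pointwise bounds and integrating, I would obtain
\[
\frac{1-\epsilon}{1+\epsilon}\;\le\;\frac{\lmr(u_{k;\lt,\gt,\gtau})}{\lmr(u_{k;\lt,\gt,\ltau})}\;\le\;\frac{1+\epsilon}{1-\epsilon},
\]
which, after replacing $\epsilon$ by a suitably smaller quantity, yields the claim.

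The main obstacle will be deriving the boundary representation for $\lmr(u_{k;\lt,\gt,\tau})$: the image of $u_{k;\lt,\gt,\tau}$ is not itself a circularly slit disk (it is $D_k(\lt,\tau)$ minus the extra slit $S_{k;\lt,\gt,\tau}$), so the $(\star)$-argument must be adapted with some care to ensure that the contributions from the concentric-slit portion of $\partial D_k(\gt,\tau)$ genuinely cancel. Once that representation is in hand, the remaining manipulations are essentially bookkeeping, and Lemmas \ref{Lem:FunOneAbs} and \ref{Lem:FunTwoAbs} were set up precisely to deliver the two pointwise estimates needed for the substitution.
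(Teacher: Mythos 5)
Your proposal is correct and follows essentially the same route as the paper: the reduction to $\lmr(u_{k;\lt,\gt,\ltau})/\lmr(u_{k;\lt,\gt,\gtau})$, the boundary-integral representation over $s_{k;\lt,\gt,\tau}$ via the $(\star)$-type Cauchy argument, the intertwining identity $u_{k;\lt,\gt,\ltau}=T_{k;\lt,\ltau,\gtau}^{-1}\circ u_{k;\lt,\gt,\gtau}\circ T_{k;\gt,\ltau,\gtau}$, and the two pointwise estimates from Lemmas \ref{Lem:FunOneAbs} and \ref{Lem:FunTwoAbs} are exactly the paper's steps. The only cosmetic differences are that you transport the integral for $\gtau$ to the arc $s_{k;\lt,\gt,\ltau}$ rather than the other way around, and you integrate pointwise bounds where the paper invokes the mean value theorem to extract $|T'_{k;\gt,\ltau,\gtau}(z^*)|$; both variants yield the same bound $\bigl(\tfrac{1-\epsilon}{1+\epsilon},\tfrac{1+\epsilon}{1-\epsilon}\bigr)$.
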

\begin{proof}
	Since there is no risk of confusion, we omit the index $k$.
	As $0\le\lt,\gt,\ltau,\gtau\le T$ with $0<\gt-\lt$ and $0\le\gtau-\ltau$ we get 
	by using Cauchy's formula and the mean value theorem of integration
	\begin{align*}
		\frac{\lmr(f_{\lt,\ltau})-\lmr(f_{\gt,\ltau})}{\lmr(f_{\lt,\gtau})-
				\lmr(f_{\gt,\gtau})}
			&=\frac{\ln\big(u'_{\lt,\gt,\ltau}(0)\big)}{\ln\big(u'_{\lt,\gt,\gtau}(0)\big)}
				=\frac{ \int_{s_{\lt,\gt,\ltau}} \ln|u_{\lt,\gt,\ltau}(z)| \,|\intd z|}
				{ \int_{s_{\lt,\gt,\gtau}} \ln|u_{\lt,\gt,\gtau}(z)| \,|\intd z|}\\
			&\hspace{-3.5cm}=\frac{ \int_{s_{\lt,\gt,\ltau}} \ln\big|T^{-1}_{\lt,\ltau,\gtau}
				\big(u_{\lt,\gt,\gtau}(T_{\gt,\ltau,\gtau}(z))\big)\big| \,|\intd z|}
				{\int_{s_{\lt,\gt,\gtau}} \ln|u_{\lt,\gt,\gtau}(z)| \,|\intd z|}
			=\frac{ \int_{s_{\lt,\gt,\ltau}} \ln\big|T^{-1}_{\lt,\ltau,\gtau}
				\big(u_{\lt,\gt,\gtau}(T_{\gt,\ltau,\gtau}(z))\big)\big| \,|\intd z|}
				{\int_{s_{\lt,\gt,\ltau}} \ln\big|u_{\lt,\gt,\gtau}\big(T_{\gt,\ltau,\gtau}(z)
				\big)\big| \cdot |T'_{\gt,\ltau,\gtau}(z)|  \,|\intd z|}\\
			&\hspace{-3.5cm}=\frac{ \int_{s_{\lt,\gt,\ltau}} \ln\big|T^{-1}_{\lt,\ltau,\gtau}
				\big(u_{\lt,\gt,\gtau}(T_{\gt,\ltau,\gtau}(z))\big)\big| \,|\intd z|}
				{|T'_{\gt,\ltau,\gtau}(z^*)| \int_{s_{\lt,\gt,\ltau}} \ln\big|u_{\lt,\gt,\gtau}
				\big(T_{\gt,\ltau,\gtau}(z) \big)\big| \,|\intd z|}.
	\end{align*}
	Consequently we find by Lemma \ref{Lem:FunTwoAbs}
	for any given $\epsilon>0$ a $\delta>0$ with
	\[
		\frac{1-\epsilon}{|T'_{\gt,\ltau,\gtau}(z^*)|} <
		\frac{\lmr(f_{\lt,\ltau})-\lmr(f_{\gt,\ltau})}{\lmr(f_{\lt,\gtau})-
			\lmr(f_{\gt,\gtau})}
		< \frac{1+\epsilon}{|T'_{\gt,\ltau,\gtau}(z^*)|}
	\]
	for all $0\le\lt,\gt,\ltau,\gtau\le T$ with $0<\gt-\lt<\delta$ and 
	$0\le\gtau-\ltau<\delta$. Finally, we find by Lemma \ref{Lem:FunOneAbs} a $\delta^*<\delta$ 
	with $1-\epsilon<|T'_{\gt,\ltau,\gtau}(z^*)|<1+\epsilon$ thus we have
	\[
		\frac{1-\epsilon}{1+\epsilon} <
		\frac{\lmr(f_{\lt,\ltau})-\lmr(f_{\gt,\ltau})}{\lmr(f_{\lt,\gtau})-
			\lmr(f_{\gt,\gtau})}
		< \frac{1+\epsilon}{1-\epsilon}
	\]
	for all $0\le\lt,\gt,\ltau,\gtau\le T$ with $0<\gt-\lt<\delta^*$ and 
	$0\le\gtau-\ltau<\delta^*$, so the proof is complete.
\end{proof}
\begin{proof}[Proof of Theorem \ref{The:LamAlmEve}]
	\begin{selflist}
	\item Since there is no risk of confusion, we omit the index $k$.
		First of all we consider the term
		\[
			S(f,[\lt,\gt],Z):= \sum_{l=0}^{s-1} 
			\big[ \lmr(f_{t_{l+1},t_l}) - \lmr(f_{t_l,t_l}) \big],
		\]
		where $Z=\{t_0, t_1, \ldots, t_s\}$ with $t_0=\lt$ and $t_s=\gt$ denotes a partition
		of the interval $[\lt,\gt]$ with $0\le\lt<\gt\le T$. 
		We denote by $|Z|$ the norm of the partition $Z$, i.e.
		\[
			|Z| = \max_{j=1,\ldots,s} |t_{j}-t_{j-1}|.
		\]
		Furthermore we set $S(f,t,Z):=S(f,[0,t],Z)$.
	\item We will show now, that $S(f,t,Z)$ converges to a value $c(t)\ge 0$ if 
		$|Z|\rightarrow 0$.
		Therefore, we consider two partitions $Z_1=\{t_0^*,\ldots,t_{s_1}^*\}$ and $Z_2$ of 
		the interval $[0,t]$ with $|Z_1|,|Z_2|<\delta$ where $\delta>0$. 
		Denote by $Z=\{t_0,\ldots,t_s\}$ the union of $Z_1$ and $Z_2$.
		By adding zeros we achieve
		\begin{multline*}
			|S(f,t,Z) - S(f,t,Z_1)| \le\\ 
			\sum_{l=0}^{s-1} |[\lmr(f_{t_{l+1},t_l})-\lmr(f_{t_l,t_l})]-
			[\lmr(f_{t_{l+1},\phi(t_l)})-\lmr(f_{t_l,\phi(t_l)})]|,
		\end{multline*}
		where $\phi(t_l):= t^*_{j}$ if $t_l\in [t^*_{j},t^*_{j+1})$ with $l=0,\ldots s-1$ and $j=0,
		\ldots,s_1 -1$. Consequently $|\phi(t_l)-t_l|\le \delta$. Thus we get
		\begin{multline*}
			|S(f,t,Z) - S(f,t,Z_1)| \le\\ 
			\sum_{l=0}^{s-1} |\lmr(f_{t_{l+1},t_l})-\lmr(f_{t_l,t_l})|\cdot
				\left |1- \frac{\lmr(f_{t_{l+1},\phi(t_l)})-\lmr(f_{t_l,\phi(t_l)})}
				{\lmr(f_{t_{l+1},t_l})-\lmr(f_{t_l,t_l})}\right|.
		\end{multline*}
		For any given $\epsilon>0$ we can choose $\delta>0$ (by using Proposition 
		\ref{Pro:DifQuoAbs}) in such a way that 
		\[
			1-\epsilon<\frac{\lmr(f_{t_l,\phi(t_l)})-\lmr(f_{t_{l+1},\phi(t_l)})}
				{\lmr(f_{t_l,t_l})-\lmr(f_{t_{l+1},t_l})} < 1+\epsilon
		\]
		holds for all $l=0,\ldots,s$. Thus, by using Proposition \ref{Pro:Monfunf}, we get
		\begin{align*}
			|S(f,t,Z) - S(f,t,Z_1)|&\le
				\epsilon\sum_{l=0}^{s-1} \big(\lmr(f_{t_{l+1},t_l})-\lmr(f_{t_l,t_l})
					\big)\\[-0.5\baselineskip]
				&\hspace{-4.1cm}<\epsilon\sum_{l=0}^{s-1} \big(\lmr(f_{t_{l+1},t_{l+1}})-
					\lmr(f_{t_{l+1},t_l})\big) +  
					\epsilon\sum_{l=0}^{s-1} \big(\lmr(f_{t_{l+1},t_l})-\lmr(f_{t_l,t_l})\big)\\
				&\hspace{-4.1cm}=\epsilon \cdot \big(\lmr(f_{t_s,t_s})-\lmr(f_{t_0,t_0})\big)	
					=\epsilon\cdot\big(\lmr(g_t)-\lmr(g_0)\big)
		\end{align*}
		By replacing $Z_1$ with $Z_2$ we get $|S(f,t,Z) - S(f,t,Z_2)|
		\le\epsilon\big(\lmr(g_t)-\lmr(g_0)\big)$. Consequently we have
		$|S(f,t,Z_1) - S(f,t,Z_2)|\le \epsilon\big(\lmr(g_t)-\lmr(g_0)\big)$,
		so $S(f,t,Z)$ converges to a value $c(t)\in[0,\infty)$ if $|Z|\rightarrow 0$.
	\item Next we will show that the function $t\mapsto c(t)$ is increasing. 
		Let $0< t_1<t_2\le T$, $Z_1(n):=\{0,\frac{t_1}{n},\frac{2t_1}{n},\ldots,t_1\}$,
		$Z_2(n):=\{t_1,t_1+\frac{t_2-t_1}{n},t_1+2\frac{t_2-t_1}{n},\ldots,t_2\}$ and
		$Z(n):=Z_1(n)\cup Z_2(n)$.
		Thus we have
		\begin{align} \tag{$\star\star$} \label{Equ:LipCon}
		\begin{split}
			c(t_2) - c(t_1) &=\lim_{n\rightarrow\infty} S(f,t_1,Z(n)) - S(f,t_2,Z_1(n))\\
			&=\lim_{n\rightarrow\infty} S(f,[t_1,t_2],Z_2(n)) \ge 0.
		\end{split}
		\end{align}
		Consequently, $t\mapsto c(t)$ is an increasing real-valued function. Thus this function
		is differentiable on $[0,T]\setminus \mathcal{N}_1$, where $\mathcal{N}_1$ denotes 
		a zero set with respect to the Lebesgue measure.
	\item Now we will show that $\lambda(t)$ exists almost everywhere.
		First we consider the function $t\mapsto\lmr(f_{t,t})=\lmr(g_t)$ which is strictly 
		increasing. Thus we find a zero set $\mathcal{N}_2$ such that 
		$t\mapsto\lmr(f_{t,t})=\lmr(g_t)$ is differentiable on $[0,T]\setminus \mathcal{N}_2$. 
		Let $t_0\in [0,T]\setminus \mathcal{N}$, where 
		$\mathcal{N}:=\mathcal{N}_1\cup \mathcal{N}_2$ and denote by ${(t_n)}_{n\in\N}$ a 
		sequence that converges to $t_0$ with $t_n> t_0$. Let $\epsilon>0$ 
		and $n_0\in\N$ be chosen in a way that $|t_n-t_0|< \delta$ holds for all $n\ge n_0$, 
		where $\delta$ is chosen according to Proposition \ref{Pro:DifQuoAbs} with respect to 
		$\epsilon$. For now we fix $n\ge n_0$.
		Let $Z=\{t_0,\ldots, t_s\}$ be an arbitrary partition of the interval 
		$[t_0,t_n]$. Thus we get
		\begin{align*}
			 &\left|\sum_{l=0}^{s-1} [\lmr(f_{t_{l+1},t_l})-\lmr(f_{t_l,t_l})] - 
			 [\lmr(f_{t_n,t_0})-\lmr(f_{t_0,t_0})]\right|\\
			 =& \left|\sum_{l=0}^{s-1} \big([\lmr(f_{t_{l+1},t_l})-\lmr(f_{t_l,t_l})] - 
			 [\lmr(f_{t_{l+1},t_0})-\lmr(f_{t_l,t_0})] \big)\right| = *
		\end{align*}
		where the first equality follows by adding zeros. By using Proposition \ref{Pro:DifQuoAbs} 
		and Proposition \ref{Pro:Monfunf} in combination with a telescoping sum as before, we see
		\begin{align*}
			* &\le \sum_{l=0}^s |\lmr(f_{t_{l+1},t_l})-\lmr(f_{t_l,t_l})|
			\cdot\left|1-\frac{\lmr(f_{t_{l+1},t_0})-\lmr(f_{t_l,t_0})}
			{\lmr(f_{t_{l+1},t_l})-\lmr(f_{t_l,t_l})}  \right|\\
			&< \sum_{l=0}^s \big(\lmr(f_{t_{l+1},t_l})-\lmr(f_{t_l,t_l})\big)\cdot \epsilon<
			\epsilon\cdot \big(\lmr(g_{t_n})-\lmr(g_{t_0})\big).
		\end{align*}
		Thus we get 
		\[
			|c(t_n)-c(t_0) - [\lmr(f_{t_n,t_0})-\lmr(f_{t_0,t_0})]|<
			\epsilon |\lmr(g_{t_n})-\lmr(g_{t_0})|,
		\]
		as $|Z|$ tends to zero. By dividing with $t_n-t_0$ we obtain
		\[
			\left|\frac{c(t_n)-c(t_0)}{t_n-t_0} - \frac{\lmr(f_{t_n,t_0})-
			\lmr(f_{t_0,t_0})}{t_n-t_0} \right|<
			\epsilon \left|\frac{\lmr(g_{t_n})-\lmr(g_{t_0})}{t_n-t_0} \right|.
		\]
		Since $t\mapsto c(t)$ and $t\mapsto \lmr(g_t)$ are differentiable at $t_0$, we get 
		finally the existence of $\lambda(t_0)$. Furthermore, we conclude 
		$\lambda(t_0)=\dot c(t_0)$.
		The other case $t_n\nearrow t_0$ follows in the same way.
	\end{selflist}
\end{proof}
However, this proof shows further interesting details, which we put together in the 
following proposition.
\begin{proposition} \label{Pro:ToolConstCoeff}
	Denote by $g_t$ and $f_{k;t,\tau}$ the corresponding mapping functions as mentioned before. 
	Then the limits 
	\[
		c_k(t):=\lim_{|Z|\rightarrow 0} S(f_k,t,Z)
	\]
	exists and form increasing functions
	$c_k:[0,T]\rightarrow[0,\infty)$ with $c_k(0)=0$ for all $k=1,\ldots,m$. 
	If $t\mapsto c_k(t)$ and $t\mapsto \lmr(g_t)$ are differentiable at $t_0$ then the
	relation $\lambda_k(t_0) = \dot c_k(t_0)$ holds, i.e. $\lambda_k$ exists in $t_0$.
	Furthermore, if $g'_t(0)=e^t$, i.e. $\lmr(g_t)=t$ for all $t\in[0,T]$ the functions 
	$t\mapsto c_k(t)$ are Lipschitz continuous in [0,T] for all $k=1,\ldots,m$
	and fulfil the condition $\sum_{k=1}^m c_k(t) = t$ for all $t\in[0,T]$.
\end{proposition}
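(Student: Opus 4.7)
The plan is to harvest most of the statement directly from the proof of Theorem~\ref{The:LamAlmEve}, which already contains all the delicate estimates; the two genuinely new ingredients are the Lipschitz continuity under the normalization $\lmr(g_t)=t$ and the sum condition $\sum_k c_k(t)=t$.

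First I would observe that existence of $c_k(t)\in[0,\infty)$ with $c_k(0)=0$, together with the monotonicity of $t\mapsto c_k(t)$, are already contained in steps 2 and 3 of the proof of Theorem~\ref{The:LamAlmEve}: the Cauchy property of the sums $S(f_k,t,Z)$ as $|Z|\to 0$ follows from Proposition~\ref{Pro:DifQuoAbs} combined with the telescoping bound based on Proposition~\ref{Pro:Monfunf}, and the computation labelled $(\star\star)$ delivers both $c_k(0)=0$ and monotonicity. The identity $\lambda_k(t_0)=\dot c_k(t_0)$ at any point where both $c_k$ and $\lmr(g_{\cdot})$ are differentiable is precisely step 4 of that same proof, restated in the present language.

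For the Lipschitz claim, assume $\lmr(g_t)=t$ for all $t\in[0,T]$. Applying $(\star\star)$ to a uniform subdivision $Z_2(n)=\{s_0,\ldots,s_n\}$ of $[t_1,t_2]$ with $s_l=t_1+l(t_2-t_1)/n$ gives
\[
c_k(t_2)-c_k(t_1)=\lim_{n\to\infty}\sum_{l=0}^{n-1}\bigl[\lmr(f_{k;s_{l+1},s_l})-\lmr(f_{k;s_l,s_l})\bigr].
\]
Proposition~\ref{Pro:Monfunf} (monotonicity of $\lmr(f_{k;t,\tau})$ in $\tau$) yields $\lmr(f_{k;s_{l+1},s_l})\le\lmr(f_{k;s_{l+1},s_{l+1}})=\lmr(g_{s_{l+1}})=s_{l+1}$, while $\lmr(f_{k;s_l,s_l})=\lmr(g_{s_l})=s_l$. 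Each summand is therefore at most $s_{l+1}-s_l$ and the whole sum at most $t_2-t_1$. Together with the monotonicity of $c_k$ this gives the $1$-Lipschitz bound $|c_k(t_2)-c_k(t_1)|\le|t_2-t_1|$.

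Finally, for the sum condition, Lipschitz continuity forces each $c_k$ to be absolutely continuous, so $c_k(t)=\int_0^t\dot c_k(s)\,ds$. Under the assumption $\lmr(g_t)=t$ the map $\lmr(g_{\cdot})$ is differentiable everywhere, so the previous step gives $\dot c_k(s)=\lambda_k(s)$ at every $s$ where $c_k$ is differentiable, i.e.\ almost everywhere; Theorem~\ref{The:LoKCSDMul} then supplies $\sum_{k=1}^m\lambda_k(s)=1$ at every such $s$. Integrating,
\[
\sum_{k=1}^m c_k(t)=\sum_{k=1}^m\int_0^t\dot c_k(s)\,ds=\int_0^t\sum_{k=1}^m\lambda_k(s)\,ds=t.
\]
The main conceptual obstacle I would anticipate is the temptation to prove this identity directly by combinatorial telescoping over an $m$-fold time grid, which would require a multi-index strengthening of Proposition~\ref{Pro:DifQuoAbs} allowing different $\tau$-values for different slits; routing through Theorem~\ref{The:LoKCSDMul} as above neatly bypasses this complication.
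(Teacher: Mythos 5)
Your proposal is correct and follows essentially the same route as the paper: existence, monotonicity, and the identity $\lambda_k(t_0)=\dot c_k(t_0)$ are harvested from the proof of Theorem~\ref{The:LamAlmEve}, the Lipschitz bound comes from the same telescoping/monotonicity estimate via Proposition~\ref{Pro:Monfunf} applied to $(\star\star)$, and the sum condition is obtained by integrating $\sum_k\dot c_k=\sum_k\lambda_k\equiv 1$ (from Theorem~\ref{The:LoKCSDMul}) using the absolute continuity supplied by the Lipschitz property. The only cosmetic difference is that you bound each summand termwise rather than quoting the telescoped inequality wholesale, which changes nothing of substance.
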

\begin{proof}
	First of all we will prove the Lipschitz continuity of $c_k(t)$ if 
	$g_t=e^t$ holds for all $t\in[0,T]$ and $k=1,\ldots,m$. Therefore, let $k\in\{1,\ldots,m\}$
	and $0\le t_1< t_2\le T$ be fix. By using the same notation as in equation (\ref{Equ:LipCon})
	we get
	\begin{align*}
		c_k(t_2) - c_k(t_1) &=\lim_{n\rightarrow\infty} |S(f_k,[t_1,t_2],Z_2(n))| \\
			&\le \lim_{n\rightarrow\infty} \lmr(g_{t_2})- \lmr(g_{t_1}) = t_2 - t_1,
	\end{align*}
	so $c_k$ is Lipschitz continuous, since all values are nonnegative.
	Consequently the function $c(t):=\sum_{k=1}c_k(t)$ is Lipschitz continuous too 
	and we get by Theorem \ref{The:LoKCSDMul} $\dot c(t)= \sum_{k=1}^m \lambda_k(t)\equiv 1$ 
	almost everywhere in $[0,T]$. Thus $c(t)= t$ as $c(0)=0$.
\end{proof}
\begin{proof}[Proof of Corollary \ref{Cor:RegSimp}]
Let $t_0\in[0,T]$ and $k\in\{1,\ldots, m\}$ be fixed.
\begin{selflist}
	\item First of all we will prove that the value $\lambda_k(t_0)$ exists. 
	Since $k$ and $t_0$ are arbitrary chosen, the differential equation immediately follows by
	Corollary \ref{Cor:SimpleConCase}.
	
	Therefore let be $\gamma_1,\ldots,\gamma_m \in C^2([0,T])$, $k\in\{1,\ldots, m\}$ and 
	$t_0\in[0,T]$. 
	We denote $H:=f_{k;0,t_0}$. 
	Consequently $\delta:=H\circ \gamma_k \in C^2([0,T])$. 
	Next we define $G_t:=\D\setminus \delta[0,t]$ and denote by $h_t$ the unique conformal
	mapping that maps $G_t$ onto $\D$ with the normalization $h_t(0)=0$ and $h'(0)>0$.
	Observe that $H$ and $h_t$ depend on $t_0$.
	\begin{center}
		\includegraphics[scale=\scalefactor]{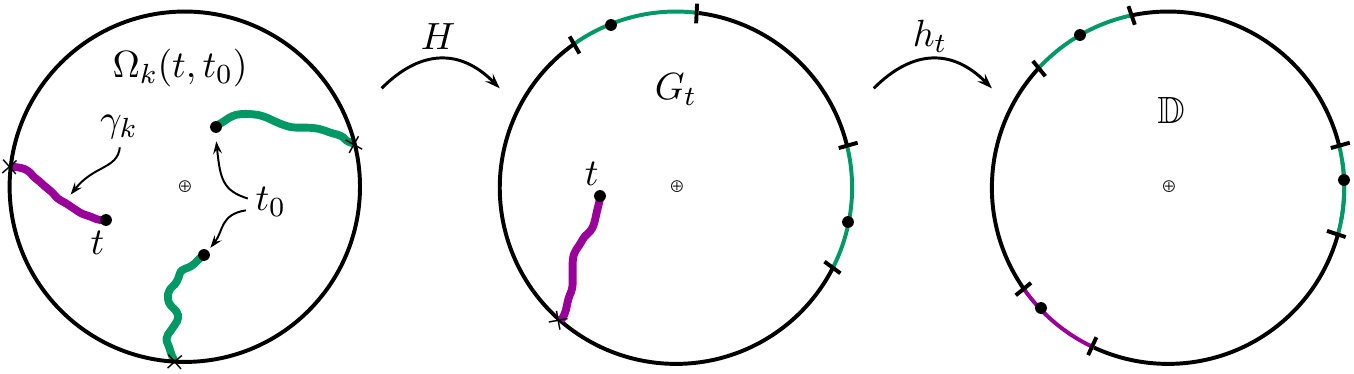}
	\end{center}
	Consequently, by applying Theorem 2 of \cite{EarleEpstein}, we find that the function 
	$t\mapsto\lmr(h_t)=\ln h'_t(0)$ is in $C^1[0,T]$, so that we have due to
	$f_{k;t,t_0} = h_t \circ H$:
	\[
		\frac{\text{d}}{\text{d}t} \lmr(h_t)\Big|_{t=t_0} = \lim_{t\rightarrow t_0}
		\frac{\lmr(f_{k;t,t_0})-\lmr(f_{k;t_0,t_0})}{t-t_0} = \lambda_k(t_0).
	\]
	\item Next we prove the continuity of $\lambda_k$. Therefore, let $\epsilon >0$ and denote
	$l(t):= \frac{\text{d}}{\text{d}t} \lmr(h_t)$. 
	By Proposition \ref{Pro:DifQuoAbs} we find a $\delta>0$ according to $\epsilon$.
	Since $l(t)$ is continuous, we find a $0<\rho<\delta$ so that 
	\[
		|l(t_0)-l(t)| < \epsilon
	\]
	holds for all $t\in (t_0-\rho,t_0+\rho)$.\\
	Moreover let  $M:=\max_{t\in[0,T]} l(t)$ and $t_1\in (t_0-\rho,t_0+\rho)$ be fixed.\\
	Let $\rho>0$ be chosen so small that
	\begin{align*}
		\Big|\frac{\lmr(f_{k;t_j+h,t_0})-\lmr(f_{k;t_j,t_0})}{h}-l(t_j)\Big|<\epsilon,\\
		\Big|\frac{\lmr(f_{k;t_j+h,t_j})-\lmr(f_{k;t_j,t_j})}{h}-\lambda(t_j)\Big|
			<\epsilon
	\end{align*}
	holds for $j=0,1$ and $|h|<\rho$. It follows
	\begin{align*}
		&\Big|\frac{\lmr(f_{k;t_1+h,t_0})-\lmr(f_{k;t_1,t_0})}{h} - 
		\frac{\lmr(f_{k;t_1+h,t_1})-\lmr(f_{k;t_1,t_1})}{h}\Big|\\
		&\hspace{1cm}= \frac{|\lmr(f_{k;t_1+h,t_0})-\lmr(f_{k;t_1,t_0})|}{h} \;
		\Big|1 - \frac{\lmr(f_{k;t_1+h,t_1})-\lmr(f_{k;t_1,t_1})}{\lmr(f_{k;t_1+h,t_0})
			-\lmr(f_{k;t_1,t_0})}\Big|\\
		&\hspace{1cm}\le (l(t_1)+\epsilon) \epsilon 
			\le (M+\epsilon ) \epsilon.
	\end{align*}
	Summarizing we have for all $t_1\in  (t_0-\rho,t_0+\rho)$
	\begin{align*}
		|\lambda_k(t_0)-\lambda_k(t_1)| &= |l(t_0)-\lambda_k(t_1)|\\
		&\le \Big|l(t_0)-\frac{\lmr(f_{k;t_1+h,t_1})-\lmr(f_{k;t_1,t_1})}{h}\Big| + \epsilon\\
		&\le \Big|l(t_0)-\frac{\lmr(f_{k;t_1+h,t_0})-\lmr(f_{k;t_1,t_0})}{h}\Big| + 
		\epsilon + (M+\epsilon) \epsilon \\
		&\le |l(t_0)-l(t_1)| + 2\epsilon + (M+\epsilon) \epsilon
		< 3\epsilon+ (M+ \epsilon) \epsilon,
	\end{align*}
	so the proof is complete.
\end{selflist}
\end{proof}

		\section{Generalization to arbitrary domains}\label{Sec:GenArb}
Theorem \ref{The:LoKCSDMul} can also be extended to arbitrary domains. Let $\Omega^*$ be an 
arbitrary $n$ connected domain and $\gamma^*_k: [0,T]\rightarrow \bar \Omega^*\setminus\{z_0\}$ 
with $\gamma_k^*\big((0,T]\big)\subset \Omega^*\setminus\{z_0\}$ and 
accessible boundary points $\gamma_k^*(0)\in\partial \Omega^*$ for all $k=1,\ldots, m$. 
Denote by
\[
	g^*_t:\, \Omega^*(t) := \Omega^*\setminus\left (\bigcup_{k=1}^m \gamma_k^*\big((0,t] \big) 
	\right) \rightarrow D^*(t)
\]
a conformal mapping with $g^*_t(z_0)=0$ and ${g^*_t}'(z_0)>0$, where $D^*(t)$ stands for a 
circularly slit domain. As mentioned before, this function is unique. The function $f^*_{k,t,\tau}$ 
is defined analogous as before.
By $\xi^*_k(t)$ we mean the image of $\gamma_k^*(t)$ under the function $g^*_t$ for all
$k=1,\ldots, m$. Similar to Theorem \ref{The:LoKCSDMul} we get the following theorem.
\begin{theorem} 
	Denote by $g^*_t$ the corresponding mapping function as mentioned before. 
	Assume the values $\lambda^*_k(t_0)$ ($k=1,\ldots,m$) 
	exist for $t_0\in[0,T]$, then the function $g^*_t(z)$ is differentiable (w.r.t. $t$) 
	at $t_0$ for every $z\in\Omega^*(t_0)$ and it holds
	\[
		\dot g^*_{t_0}{(z)} = g^*_{t_0}(z) \sum_{k=1}^m \lambda_k^*(t_0) \cdot \Phi(\xi^*_k(t_0),
		g^*_{t_0}(z);t_0).
	\]
	Furthermore, the functions $\lambda^*_k(t_0)$ fulfill the condition 
	$\sum_{k=1}^m \lambda^*_k(t_0)= 1$, if the condition ${g^*_t}'(0)=c\,e^t$ holds in some
	neighborhood of $t_0$ with some $c>0$.
\end{theorem}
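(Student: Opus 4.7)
The plan is to reduce the theorem to Theorem \ref{The:LoKCSDMul} via a single, time-independent conformal change of coordinates that normalizes $\Omega^*$ to a circularly slit disk.

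By the extended Riemann mapping theorem (see \cite{ConwayII}, Chapter 15.6) there is a unique conformal map $\varphi:\Omega^*\to\Omega$ onto a circularly slit disk $\Omega$ with $\varphi(z_0)=0$ and $\varphi'(z_0)>0$. Carath\'eodory's prime-end theory guarantees that $\varphi$ extends to a bijection between the prime ends, so the transported curves $\gamma_k:=\varphi\circ\gamma_k^*$ are disjoint, simple, continuous curves in $\bar\Omega$ with $\gamma_k((0,T])\subset\Omega\setminus\{0\}$ and accessible $\gamma_k(0)\in\partial\Omega$. For each $t\in[0,T]$ the composition $g_t\circ\varphi$ then satisfies the same normalization as $g_t^*$, so by uniqueness of the extended Riemann map $g_t^*=g_t\circ\varphi$, $D^*(t)=D_t$, and analogously $f_{k;t,\tau}^*=f_{k;t,\tau}\circ\varphi$.

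From these identifications everything transfers cleanly. The driving terms coincide, $\xi_k^*(t)=g_t^*(\gamma_k^*(t))=g_t(\gamma_k(t))=\xi_k(t)$. Taking logarithmic mapping radii, $\lmr(f_{k;t,\tau}^*)=\lmr(f_{k;t,\tau})+\lmr(\varphi)$, and the $t$-independent constant $\lmr(\varphi)$ cancels in the difference quotient, so $\lambda_k^*(t_0)$ exists if and only if $\lambda_k(t_0)$ does and the two values coincide. Applying Theorem \ref{The:LoKCSDMul} to $(\Omega,\gamma_1,\ldots,\gamma_m)$ at $t_0$ and using $D^*(t_0)=D_{t_0}$ (so that $\Phi$ literally agrees in both settings) gives, for every $z\in\Omega^*(t_0)$,
\[
\dot g_{t_0}^*(z)=\dot g_{t_0}(\varphi(z))=g_{t_0}(\varphi(z))\sum_{k=1}^m\lambda_k(t_0)\,\Phi\big(\xi_k(t_0),g_{t_0}(\varphi(z));t_0\big)=g_{t_0}^*(z)\sum_{k=1}^m\lambda_k^*(t_0)\,\Phi\big(\xi_k^*(t_0),g_{t_0}^*(z);t_0\big).
\]
For the closing condition, ${g_t^*}'(z_0)=c\,e^t$ near $t_0$ translates into $g_t'(0)=(c/\varphi'(z_0))\,e^t$, so Theorem \ref{The:LoKCSDMul} delivers $\sum_k\lambda_k^*(t_0)=\sum_k\lambda_k(t_0)=1$.

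The main subtlety I anticipate is that the original setup required the base points $\gamma_k(0)$ to lie on the outer boundary $\partial\D$, whereas after applying $\varphi$ some transported base points may well end up on an inner concentric slit of $\Omega$. One must therefore verify that the proof of Theorem \ref{The:LoKCSDMul} (and particularly Lemma \ref{Lem:PoiForBF}) goes through in this slightly wider setup. The key point is that $\ln|g_{\gt,\lt}(\zeta)/\zeta|$ remains constant on each inner component of $\partial D_{\gt}$ away from the tip-image sets $\beta_k(\gt,\lt)$, since every inner component is a concentric circular arc of constant modulus and is mapped by $g_{\gt,\lt}$ into a concentric arc of $D_{\lt}$ of constant modulus; the periods-annihilation argument and the Poisson representation are unaffected, so the rest of the proof of Theorem \ref{The:LoKCSDMul} can be repeated verbatim.
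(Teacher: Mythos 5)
Your proposal is correct and follows essentially the same route as the paper: conjugate by the fixed normalized Riemann map $h:\Omega^*\to\Omega$ onto a circularly slit disk, note that $\lmr(f^*_{k;t,\tau})=\lmr(f_{k;t,\tau})+\lmr(h)$ so the constant cancels in the difference quotients and $\lambda_k^*(t_0)=\lambda_k(t_0)$, and then invoke Theorem \ref{The:LoKCSDMul}. You are in fact more careful than the paper's own (very terse) proof, which does not address the subtlety you raise that the transported base points $\gamma_k(0)$ may land on inner boundary components of $\Omega$ rather than on $\partial\D$.
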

\begin{proof}
	Since there is a conformal mapping $h$ that maps $\Omega^*$ onto a circularly slit disk $\Omega$
	with $h(z_0)=0$ and $h'(z_0)>0$ we can transform the domain $\Omega^*(t)$ to $\Omega(t)$ by 
	using the mapping $h$. Thus we get 
	\begin{align*}
		\lambda_k^*(t_0)&= \lim_{t\rightarrow t_0} \frac{\lmr(f^*_{k;t,t_0})-\lmr(f^*_{k;t,t_0})}
			{t-t_0}
		=\lim_{t\rightarrow t_0} \frac{\lmr(f_{k;t,t_0}\circ h)-\lmr(f_{k;t,t_0}\circ h)}
			{t-t_0}\\
		&=\lim_{t\rightarrow t_0} \frac{\lmr(f_{k;t,t_0})-\lmr(f_{k;t,t_0})}{t-t_0}
		=\lambda_k(t_0)
	\end{align*}
	where $f$ corresponds to $\Omega(t)$ as before. 
	We can easily derive this result now from Theorem \ref{The:LoKCSDMul}.
\end{proof}

		\section*{Appendix} 
\begin{lemma} \label{Lem:ConvPhi}
	Let $(\xi_n)_{n\in\N}\subset\partial\D$ and $(t_n)_{n\in\N}\subset[0,T]$ be sequences 
	that converge to $\xi_0\in \partial\D$ and $t_0\in [0,T]$ respectively.
	Then the sequence $\big(\Phi(\xi_n,g_{t_n}(w),t_n)\big)_{n\in\N}$ converges
	locally uniformly to $\Phi(\xi_0,g_{t_0}(w),t_0)$ on $(\Omega_{t_n})_{n\in\N}$.
\end{lemma}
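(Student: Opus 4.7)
The plan is to split the statement into two independent convergences and then combine them on compact sets. I would show first that the Riemann maps $g_{t_n}\to g_{t_0}$ locally uniformly on $(\Omega_{t_n})_{n\in\N}$ in the dynamic sense of Section~\ref{Sec:ConPro}, and second that the slit-half-plane maps $\Phi(\xi_n,\cdot\,,t_n)\to\Phi(\xi_0,\cdot\,,t_0)$ locally uniformly on $(D_{t_n})_{n\in\N}$. Given any compact $K\subset\Omega_{t_0}$, the image $g_{t_0}(K)$ is compact in $D_{t_0}$, for large $n$ also $g_{t_n}(K)$ sits in a common compact neighbourhood $L$ of $g_{t_0}(K)$ by the first convergence, and an $\epsilon/2$-split using uniform convergence on $L$ together with uniform continuity of $\Phi(\xi_0,\cdot\,,t_0)$ on $L$ yields the claim.

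The first convergence is obtained by repeating the argument of Proposition~\ref{Pro:ComConf} with $g_t$ in place of $f_{k;t,\tau}$: continuity of the $\gamma_k$ makes $\Omega_{t_0}$ the kernel of $(\Omega_{t_n})$, the extremal property used in Proposition~\ref{Pro:Monfunf} gives $g_{t_n}'(0)\ge 1$, Montel plus Hurwitz produce an injective locally uniform subsequential limit, the extended Carathéodory kernel convergence (\cite{ConwayII}, Ch.~15, Lemma~7.6) forces the limit image to be a circularly slit disk, and uniqueness of the normalized slit-disk map identifies the limit with $g_{t_0}$.

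The second convergence is the crux, because $\Phi(\xi_n,\cdot\,,t_n)$ is unbounded at $\xi_n$. To restore normality I would pass to the Cayley transform
\[
	\Psi_n(z):=\frac{\Phi(\xi_n,z,t_n)-1}{\Phi(\xi_n,z,t_n)+1},
\]
which maps $D_{t_n}$ univalently into $\D$ with $\Psi_n(0)=0$ and prime-end value $1$ at $\xi_n$. Montel then yields a locally uniformly convergent subsequence; its limit $\Psi_0$ satisfies $\Psi_0(0)=0$, is univalent on $D_{t_0}$ by Hurwitz once non-constancy is established, and by another application of extended Carathéodory kernel convergence maps $D_{t_0}$ onto the Cayley image of the right half-plane minus slits parallel to the imaginary axis. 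Inverting the Cayley transform and invoking the uniqueness in \cite{Courant}, Theorem~2.3 identifies the limit with $\Phi(\xi_0,\cdot\,,t_0)$; since every subsequence yields the same limit, the full sequence converges.

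The main obstacle is the boundary identification at the moving target $\xi_n\to\xi_0$: both the singularity of $\Phi(\xi_n,\cdot\,,t_n)$ and the shape of its image slit half-plane depend on $\xi_n$. I would handle this by using $\xi_n\to\xi_0\in\partial\D$ together with the analytic arc of $\partial D_{t_n}$ near $\xi_n$, so that the Schwarz reflection principle extends each $\Psi_n$ across a fixed arc around $\xi_0$ for all large $n$. Local uniform convergence on a two-sided neighbourhood of $\xi_0$ then transports $\Psi_n(\xi_n)=1$ to $\Psi_0(\xi_0)=1$, which simultaneously establishes non-constancy of $\Psi_0$ and forces the image domains to converge to the slit half-plane attached to $\xi_0$, closing the uniqueness step.
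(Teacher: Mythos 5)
Your proposal is correct, and it reaches the same conclusion by the same overall strategy (normal families, Montel, extended Carath\'eodory kernel convergence, uniqueness of $\Phi$), but it is organized quite differently from the paper's proof. The paper treats the composed sequence $h_n(z):=\Phi(\xi_n,g_{t_n}(z),t_n)$ as a single family, runs a diagonal Montel argument over a compact exhaustion of $\Omega_{t_0}$, and then asserts in two lines that the (non-constant) limit maps onto a slit right half-plane and must equal $\Phi(\xi_0,g_{t_0}(\cdot),t_0)$ by uniqueness. You instead factor the statement into the convergence $g_{t_n}\to g_{t_0}$ on the dynamic domains (which is just Proposition~\ref{Pro:ComConf} with $\tau=t$, so nothing new is needed there) and the convergence $\Phi(\xi_n,\cdot\,,t_n)\to\Phi(\xi_0,\cdot\,,t_0)$ on $(D_{t_n})$, glued by a standard $\epsilon/2$-argument on compact sets. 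This costs you an extra composition step but buys genuine rigor at exactly the points the paper leaves implicit: the Cayley transform restores boundedness before Montel is invoked (the paper applies Montel to the unbounded family $h_n$ without comment), and your reflection argument transporting $\Psi_n(\xi_n)=1$ to $\Psi_0(\xi_0)=1$ simultaneously establishes non-constancy of the limit and pins the pole of the limit map at $\xi_0$ --- the latter being necessary to invoke the uniqueness statement of \cite{Courant}, Theorem~2.3, and being silently assumed in the paper. The one step you share with the paper at the same level of detail is the identification of the kernel of the image domains as again a slit half-plane (the analogue of \cite{ConwayII}, Chapter~15, Lemma~7.6 for this class of domains); both proofs assert this without further argument, so it is not a gap relative to the paper's standard.
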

\begin{proof}
	Since $t_n\rightarrow t_0$ we have $\Omega_{t_n}\rightarrow\Omega_{t_0}$ in the sense 
	of kernel convergence due to Caratheodory. 
	
	We will prove that the sequence $h_n(z):=\Phi(\xi_n,g_{t_n}(z),t_n)$ is normal.
	Let $(h_{n_j})_{j\in\N}$ be a subsequence of $(h_{n})_{n\in\N}$.
	We show that there is a subsequence $(m_j)_{j\in\N}$ of $(n_j)_{j\in\N}$ so that 
	for all compact sets $K\subset \Omega_{t_0}$ and all $\epsilon>0$ exists 
	a $j_0\in\N$, so that $|h_{m_j}(z)-h_0(z)|<\epsilon$ holds for all $z\in K$ and
	all $j\ge j_0$.
	
	Therefore let $(K_n)_{n\in\N}$ be a sequence of compact sets, so that
	$K_n\subset \Omega_{t_n}$, $K_n\subset \mathring K_{n+1}$ and 
	$\bigcup_{n\in\N} K_n = \Omega_{t_0}$. By Montel's theorem we find a 
	subsequence $(m_{1,j})_{j\in\N}$ of $(n_j)_{j\in\N}$, so that
	$h_{m_{1,j}}$ converges uniformly on $K_1$. Inductively we find for all $n\in\N$ 
	a subsequence $(m_{n+1,j})_{j\in\N}$ of $(m_{n,j})_{j\in\N}$ so that 
	$h_{m_{n+1,j}}$ converges uniformly on $K_n$.
	Consequently we set $m_j:=m_{j,j}$, so $h_{m_j}$ converges locally uniformly on
	$(\Omega_{t_{m_j}})_{j\in\N}$ to the analytic function $h_0$.
		
	Next, we are going to show $h_0\equiv \Phi(\xi_0,g_{t_0},t_0)$. Since $h_0$ is not constant,
	$h_0$ is a conformal mapping of $\Omega_{t_0}$ onto the $n$-connected domain $R$.
	Hereby $R$ is necessarily the right halfplane minus slits parallel to the imaginary
	axis. By using the uniqueness of the function $\Phi$ we find
	$h_0\equiv \Phi(\xi_0,g_{t_0},t_0)$.

	As all subsequences $(h_{m_j})_{j\in\N}$ converge to the same function $h_0$, also the whole
	sequence $(h_n)_{n\in\N}$ converges locally uniformly to $h_0$ on $(\Omega_{t_n})_{n\in\N}$.
\end{proof}

	\bibliographystyle{amsplain}

	Christoph B{\"o}hm and Wolfgang Lauf\\
	University of applied sciences Regensburg\\
	93053 Regensburg\\
	Germany\\[0.4\baselineskip]
	\href{mailto:christoph1.boehm@extern.oth-regensburg.de}{christoph1.boehm@extern.oth-regensburg.de}\\
	\href{mailto:wolfgang.lauf@oth-regensburg.de}{wolfgang.lauf@oth-regensburg.de}
\end{document}